%% file: LSI_on_Free_Groups.tex
\documentclass[a4paper]{amsart}
\usepackage{mathmacro}

\usepackage[maxalphanames=99,url=false,maxbibnames=99,giveninits=true,style=alphabetic]{biblatex}
\AtEveryBibitem{
  \ifentrytype{article}{\clearfield{doi}\clearfield{issn}\clearfield{isbn}}{}
  \ifentrytype{incollection}{\clearfield{doi}\clearfield{issn}\clearfield{isbn}}{}
  \ifentrytype{book}{\clearfield{doi}\clearfield{issn}\clearfield{isbn}}{}
}

\usepackage{etoolbox}

\makeatletter

\@ifpackageloaded{hyperref}{
  \@ifpackageloaded{biblatex}{
    \DeclareRobustCommand{\SkipTocEntry}[5]{}
  }
}{}

\let\origsubsection\subsection
\renewcommand{\subsection}{
  \@ifstar{\subsection@star}{\origsubsection}%
}
\newcommand{\subsection@star}[1]{
  \addtocontents{toc}{\protect\SkipTocEntry}
  \origsubsection*{#1}
}

\let\origsection\section
\renewcommand{\section}{
  \@ifstar{\section@star}{\origsection}%
}
\newcommand{\section@star}[1]{
  \addtocontents{toc}{\protect\SkipTocEntry}
  \origsection*{#1}
}

\newcommand{\addresseshere}{
  \enddoc@text\let\enddoc@text\relax
}

\makeatother

\title[Log--Sobolev inequalities on free groups]{Sharp logarithmic Sobolev inequalities on free groups by variational comparison}
\date{\today}

\author{Gan Yao}

\address[Gan Yao]{Institute for Advanced Study in Mathematics, Harbin Institute of Technology,  Harbin 150001, China.}
\email{gan.yao.math@gmail.com}

\begin{document}
\begin{abstract}
We give a variational comparison proof of the sharp logarithmic Sobolev inequality with constant $2$ for the word-length Poisson semigroup on free group von Neumann algebras. Our approach extends the Lagrange method for cyclic groups in~\cite{yao2026completesolutionoptimalhypercontractivity} to a framework in finite tracial von Neumann algebras. The inequality was recently established by Xie and Zhang~\cite{xiezhang2026hypercontractivityfreegroup} using a completely different approach. 
\end{abstract}
\keywords{Hypercontractivity, logarithmic Sobolev inequality, free group von Neumann algebra, variational comparison, noncommutative derivation}
\maketitle
\tableofcontents
\section{Introduction}\label{sec:introduction}

Let $\mathbb F_n$ be the free group on $n$ generators, where $n\in\mathbb N=\{1,2,\ldots\}$, and let $\mathcal M=\mathcal L(\mathbb F_n)$ be its group von Neumann algebra with canonical normalized trace $\tau$. We write $\lambda_g$ for the unitary associated with $g\in\mathbb F_n$ in the left regular representation, and $|g|$ for its reduced word length. The word-length generator and the associated Poisson semigroup are given by
\[
 A\lambda_g=|g|\lambda_g,\qquad
 P_t\lambda_g=e^{-t|g|}\lambda_g.
\]
For $x\in L_2(\mathcal M,\tau)_+$, define
\[
 \Ent_\tau(x^2)=\tau(x^2\log x^2)
       -\norm x_2^2\log\norm x_2^2,
\]
with $0\log0=0$, allowing the value $+\infty$. We prove the following sharp logarithmic Sobolev inequality.

\begin{theorem}\label{thm:LSI}
Let $n\in\mathbb N$. Then
\begin{equation}\label{eq:LSI}
 \Ent_\tau(x^2)\le2\norm{A^{1/2}x}_2^2,
 \qquad x\in\Dom(A^{1/2})_+.
\end{equation}
The constant $2$ is optimal.
\end{theorem}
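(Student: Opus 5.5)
Optimality of the constant $2$ is the easy half, so I would dispose of it first. Restrict to the abelian subalgebra generated by a single free generator $\lambda_{a}$. This is $L_\infty(\mathbb T)$ with Haar measure, and on it $A$ acts as $|k|$ on the character $e^{ik\theta}$, i.e. as the Poisson generator $\sqrt{-\Delta}$ on the circle. Take $x=1+\epsilon(\lambda_a+\lambda_a^*)$. Then $\Ent_\tau(x^2)=2\cdot\Var(x)+O(\epsilon^3)=4\epsilon^2+O(\epsilon^3)$, while $\norm{A^{1/2}x}_2^2=2\epsilon^2$. So no constant smaller than $2$ can work: the spectral gap of $A$ is $1$ and is attained here.

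For the inequality, I would follow the variational comparison strategy announced in the abstract. Set up the functional $\Phi(x)=2\norm{A^{1/2}x}_2^2-\Ent_\tau(x^2)$ on positive elements of $\Dom(A^{1/2})$ with $\norm x_2=1$. By density and lower semicontinuity it suffices to treat $x$ in the finite-dimensional space spanned by $\{\lambda_g:|g|\le N\}$, perturbed to be strictly positive invertible. I then argue by contradiction. If $\inf\Phi<0$, I want a minimizer in a suitably compactified class, e.g. with a cutoff or a spectral bound $\delta\le x\le\delta^{-1}$. At a minimizer the Euler--Lagrange (Lagrange multiplier) equation holds: $2Ax-\frac12\bigl(x\log x^2+\text{noncommutative correction}\bigr)=\mu x$, where the correction comes from differentiating $\tau(f(x))$. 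For the trace functional $\tau(x^2\log x^2)$ the derivative is simply $2x\log x^2+2x$ by cyclicity, so the equation is $2Ax=x\log x^2+\mu' x$ with $\mu'$ fixed by pairing with $x$.

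The comparison step is meant to reduce to the cyclic case of \cite{yao2026completesolutionoptimalhypercontractivity}. I would use the standard decomposition $A=\sum_{i}A_i$, where $A_i$ measures how many letters $a_i^{\pm1}$ occur at the ends. The better-suited tool is the noncommutative derivation representation: by the work of Haagerup and Biane, $\norm{A^{1/2}x}_2^2$ dominates a sum of $\norm{\partial_i x}^2$ for free difference quotients $\partial_i$ into $L_2(\mathcal M\bar\otimes\mathcal M^{op})$. I would then compare the Euler--Lagrange equation with its cyclic-group counterpart through the "two-point"/Lagrange inequality proved for $\mathbb Z$. That is, I would show the entropy is controlled by a Dirichlet form of derivation type, $\Ent(x^2)\le 2\sum_i\langle \partial_i x,\partial_i x\rangle$, using operator convexity of $t\mapsto t\log t$ to handle noncommutativity via the double operator integral $\frac{\log s-\log t}{s-t}$. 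Finally I would check that this derivation energy is at most $\norm{A^{1/2}x}_2^2$, by a word-by-word computation on $\lambda_g$.

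I expect the main obstacle to be this last comparison. The Dirichlet form of $A$ is not a derivation form, since $|g|$ is conditionally negative definite but its associated cocycle lives in $\ell_2$ of edges of the tree. The entropy must therefore be controlled through the tree cocycle $b(g)=\sum_{\text{edges on path }e\to g}\delta_{\text{edge}}$, for which $\norm{b(g)}^2=|g|$. I would try first to realize $A$ as $\sum_{\text{edges}}\delta_e^*\delta_e$ with $\delta_e$ the ``cut at edge $e$'' derivations. Each $\delta_e$ behaves like a two-point (Bernoulli) gradient, for which the sharp constant $2$ is the Gross two-point inequality. The remaining task is to tensorize over edges along geodesics, which must be done via the variational equation rather than genuine independence.
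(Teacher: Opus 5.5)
\textbf{Verdict: the proposal has a genuine gap.} The inequality itself is not proved.

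Your optimality argument is correct and is the paper's. With $x_\varepsilon=\mathbf 1+\varepsilon(\lambda_{s_1}+\lambda_{s_1}^*)$ one gets $\Ent_\tau(x_\varepsilon^2)=4\varepsilon^2+O(\varepsilon^3)$ against $\norm{A^{1/2}x_\varepsilon}_2^2=2\varepsilon^2$.

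The comparison step carries all the content, and you replace it with mechanisms that are either circular or unavailable.
\begin{itemize}
\item Proving $\Ent_\tau(x^2)\le2\sum_i\norm{\partial_ix}^2$ for a derivation energy dominated by $\norm{A^{1/2}x}_2^2$ is at least as strong as \eqref{eq:LSI}. With the natural normalisation on the Haar-unitary generators, the two energies in fact coincide, so this step restates the theorem.
\item Operator convexity of $t\log t$ and double operator integrals give chain-rule identities and convexity bounds, not an entropy--energy inequality. Some further input is needed.
\item Neither the edge-cut derivations nor the splitting $A=\sum_iA_i$ comes with a tensorization principle. That splitting realizes $P_t$ as a free product of circle Poisson semigroups, but the edges are not independent coordinates and $\mathcal L(\mathbb F_n)$ is a free product, not a tensor product. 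So neither the Gross two-point inequality nor the cyclic-group result transfers, and ``tensorizing via the variational equation'' is not an argument.
\item Your diagnosis of the obstacle is also mistaken. The word-length form \emph{is} a derivation form: Haagerup's tree cocycle gives $\delta(\lambda_g)=c(g)\otimes\lambda_g$ with $\delta^*\delta=A$ exactly. This is the paper's starting point, not its difficulty.
\end{itemize}

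The missing idea is a quantitative use of the Lagrange equation. The paper pairs $Ax=x\log x$, with $0<\norm x_2<1$, against $Ax$. This gives $K(x)+G(x)=2\operatorname{Re}\inner{Ax,Ax-x\log x}_2=0$. It then proves $K(x)+G(x)>-2\inner{Ax,x}_2\log\norm x_2>0$, a contradiction. This needs three ingredients absent from your plan:
\begin{itemize}
\item The chain rule $\delta f(x)=f^{[1]}(\pi_L(x),\pi_R(x))\delta x$, combined with Jensen for the convex $\varphi(s)=s-1-\log s$ against the endpoint measure $\mu_x$. This gives $G(x)\ge2\inner{Ax,x}_2(z-1-\log z)$ with $z=\operatorname{Re}\inner{Ax,x^2}_2/(2\inner{Ax,x}_2)$.
\item The length-weighted cubic estimate for $\operatorname{Re}\inner{Au,u^2}_2$. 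It comes from Haagerup's cancellation bound $\norm{\Pi_k(v_iw_j)}_2\le\norm{v_i}_2\norm{w_j}_2$ and the trace inequality $\operatorname{Tr}(DM^3)\le\norm M_{\mathrm{HS}}\operatorname{Tr}(DM^2)$.
\item The scalar comparison $z<\norm x_2e^{\theta}$, with $\theta=\inner{Au,(A-I)u}_2/\inner{Au,u}_2$.
\end{itemize}

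Your variational set-up also needs repair.
\begin{itemize}
\item Minimizing over positive elements of $\operatorname{span}\{\lambda_g:|g|\le N\}$, or under a two-sided spectral bound, can put the minimizer on the boundary of the constraint set. There only a variational inequality holds, not the Euler--Lagrange equation.
\item That span is not stable under $x\mapsto|x|$.
\end{itemize}
The paper instead minimizes over all of $\mathcal Q_{\mathrm{sa}}$ on the $L_2$ sphere. It uses the compact embedding $\mathcal Q\hookrightarrow L_p$, $2<p<3$, obtained from Haagerup's inequality, together with Fr\'echet differentiability of $H$. It then recovers a positive minimizer afterwards from $\mathcal E(|y|)\le\mathcal E(y)$.
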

Theorem~\ref{thm:LSI} was recently established by Xie and Zhang~\cite{xiezhang2026hypercontractivityfreegroup}. We give a proof by a \textbf{variational comparison method}, extending the Lagrange approach to cyclic groups in~\cite{yao2026completesolutionoptimalhypercontractivity}. Following the variational approach to logarithmic Sobolev inequalities (LSI) in~\cite{MR593787,MR674060}, we first relate failure of \eqref{eq:LSI} to a nonlinear Lagrange equation. If \eqref{eq:LSI} fails, a minimizer $x_0$ satisfies
\[
      F(x_0)=2\norm{A^{1/2}x_0}_2^2-\Ent_\tau(x_0^2)=m<0.
\]
By the Markov property, $|x_0|$ is also a minimizer. The Lagrange multiplier theorem and a rescaling then give
\[
 x=e^{m/2}|x_0|\in\Dom(A)_+,\qquad
 Ax=x\log x,\qquad 0<\norm x_2=e^{m/2}<1.
\]
Conversely, any such solution violates the LSI.

The comparison step rules out these Lagrange solutions using \textbf{noncommutative derivations}. Let $\delta$ be a closed derivation, and let $L=\delta^*\delta$ be its associated Laplacian. Define the comparison functional $K_L(x)$ and the endpoint gap $G_L(x)$ by
\begin{equation}\label{eq:KG}
 \begin{aligned}
 K_L(x)&=2\operatorname{Re}\inner{Lx,Ax}_2-\operatorname{Re}\inner{Lx,x^2}_2,\qquad x\in\Dom(A)_+\cap\Dom(L)_+,\\
 G_L(x)&=\operatorname{Re}\inner{Lx,x^2-2x\log x}_2,\qquad x\in\Dom(A)_+\cap\Dom(L)_+.
 \end{aligned}
\end{equation}
Their sum satisfies
\begin{equation}\label{eq:KGsum}
 K_L(x)+G_L(x)=2\operatorname{Re}\inner{Lx,Ax-x\log x}_2.
\end{equation}
The key idea is to choose a Laplacian $L$ whose domain contains the solutions under consideration and for which
\begin{equation}\label{eq:positive-comparison}
 K_L(x)+G_L(x)>0
 \qquad\text{for nonconstant }x\in\Dom(A)_+\cap\Dom(L)_+
 \text{ with }0<\norm x_2<1.
\end{equation}
A nonconstant Lagrange solution makes the left-hand side zero, so this comparison yields the required contradiction.\par

In the free group setting, we use the derivation associated with the word-length cocycle, for which $L=A$, and write $K=K_A$ and $G=G_A$. The joint spectral representation of $G(x)$ gives a lower bound in terms of the energy and a cubic moment. We control the cubic term in $K(x)$ by a length-weighted Fourier estimate. Its proof uses the cancellation estimate also employed in~\cite{xiezhang2026hypercontractivityfreegroup}, with a word-length weight adapted to the pairing $\inner{Au,u^2}_2$. A scalar comparison then gives \eqref{eq:positive-comparison}.

The article is organized as follows. Section~\ref{sec:preliminaries} fixes notation and recalls the analytic tools. Section~\ref{sec:framework} develops the variational criterion and the integral representation of $G_L$ in general tracial von Neumann algebras. Section~\ref{sec:free-group-settings} verifies their hypotheses for free groups and reduces Theorem~\ref{thm:LSI} to the cubic estimate. Section~\ref{sec:shell} proves that estimate. The extension to $\mathbb F_\infty$ is given in Remark~\ref{rem:variational-domain}.
\input{Preliminaries.tex}

\input{Framework_Euler.tex}
\input{Free_group_setting.tex}
\input{Shell_Estimation.tex}

\section*{Acknowledgments}
The author would like to thank Professor Quanhua Xu and Professor Simeng Wang for their patience and encouragement, as well as for their careful reading of the manuscript and many helpful discussions. The author is partially supported by the NSF of China (No. 12031004, No. W2441002, No. 12301161, No.12371138). The author acknowledges the use of ChatGPT-6 Astra.

\printbibliography
\addresseshere
\end{document}

%% file: Preliminaries.tex
\section{Preliminaries}\label{sec:preliminaries}
Throughout this section and the next, $(\mathcal M,\tau)$ is a finite von Neumann algebra equipped with a faithful normal tracial state.

\subsection*{Tracial von Neumann algebras and \texorpdfstring{$L_p$}{Lp} spaces}
For $1\le p<\infty$, define $L_p(\mathcal M,\tau)$ as the completion of $\mathcal M$ with respect to
\[
 \norm x_p=\tau(|x|^p)^{1/p},\qquad x\in\mathcal M,
\]
where $|x|=(x^*x)^{1/2}$. Its elements can be identified with possibly unbounded $\tau$-measurable operators affiliated with $\mathcal M$. We put $L_\infty(\mathcal M,\tau)=\mathcal M$ with the operator norm and abbreviate these spaces to $L_p$ when the algebra and trace are clear. The subscripts $\mathrm{sa}$ and $+$ denote the self-adjoint part and the positive cone, respectively. We use inner products that are linear in the second variable; in particular,
\[
 \inner{x,y}_2=\tau(x^*y),\qquad x,y\in L_2.
\]
The normalization of $\tau$ gives $\norm x_q\le\norm x_p$ for $1\le q\le p\le\infty$. We refer to \cite{MR1999201} for these facts and further background on noncommutative $L_p$ spaces.

For a discrete group $G$ with identity $e$, the left regular representation acts on $\ell_2(G)$ by $\lambda_g\delta_h=\delta_{gh}$, where $(\delta_h)_{h\in G}$ is the canonical orthonormal basis. The group von Neumann algebra is
\[
 \mathcal L(G)=\overline{\operatorname{span}}^{\,w^*}\{\lambda_g:g\in G\}
 \subset B(\ell_2(G)).
\]
Its canonical faithful normal tracial state is given by
\[
 \tau(x)=\inner{\delta_e,x\delta_e}_{\ell_2(G)},\qquad
 \tau(\lambda_g)=\begin{cases}1,&g=e,\\0,&g\ne e.\end{cases}
\]
For $f\in\ell_1(G)$, the series $\lambda(f)=\sum_{g\in G}f(g)\lambda_g$ converges in operator norm and satisfies $\tau(\lambda(f))=f(e)$. If $G$ is abelian, the Fourier transform identifies $\mathcal L(G)$ with $L_\infty(\widehat G)$ and its noncommutative $L_p$ spaces with the classical $L_p$ spaces on the compact Pontryagin dual $\widehat G$, equipped with normalized Haar measure.

\subsection*{Spectral calculus}
Let $S,T$ be self-adjoint operators on a Hilbert space $\mathcal K$, with spectral resolutions $E_S,E_T$. They are said to \emph{strongly commute} if every spectral projection of $S$ commutes with every spectral projection of $T$. In this case, there is a unique joint projection-valued measure $P$ on $\mathbb R^2$, characterized by $P(B\times C)=E_S(B)E_T(C)$ for Borel sets $B,C\subset\mathbb R$. The joint functional calculus is defined, for a Borel function $F:\mathbb R^2\to\mathbb C$, by
\[
 F(S,T)=\int_{\mathbb R^2}F(s,t)\,\mathrm dP(s,t).
\]
For $\xi\in\mathcal K$, the scalar measure $\mu_\xi(\Omega)=\inner{\xi,P(\Omega)\xi}_{\mathcal K}$ is positive and has mass $\norm\xi_{\mathcal K}^2$. For bounded Borel $F$,
\begin{equation}\label{eqn: joint spectral probability measure}
    \inner{\xi,F(S,T)\xi}_{\mathcal K}=\int F\,\mathrm d\mu_\xi,\qquad
 \norm{F(S,T)\xi}_{\mathcal K}^2=\int|F|^2\,\mathrm d\mu_\xi.
\end{equation}
For unbounded Borel $F$, finiteness of the second integral is equivalent to $\xi\in\Dom(F(S,T))$, and both identities remain valid on this domain. If $F$ is real and $\int|F|\,\mathrm d\mu_\xi<\infty$, the first identity also holds in the quadratic-form sense: its left-hand side is interpreted as
\[
 \lim_{m\to\infty}\inner{\xi,F_m(S,T)\xi}_{\mathcal K},\qquad
 F_m=\max\{-m,\min\{F,m\}\}.
\]
Dominated convergence identifies this limit with $\int F\,\mathrm d\mu_\xi$; square integrability is needed only to interpret $F(S,T)\xi$ as a vector. We refer to \cite{MR2953553} for further background on the spectral theory of commuting normal operators.

\subsection*{Markov semigroups and quadratic forms}
We recall the semigroup and quadratic-form framework in the tracial setting; see \cite{MR2463708}. Let $(P_t)_{t\ge0}$ be a semigroup of positive maps on $\mathcal M$ that is unital, trace-preserving and symmetric with respect to the $L_2$ inner product:
\[
 P_t\mathbf1=\mathbf1,\qquad \tau(P_ta)=\tau(a),\qquad
 \inner{P_tu,v}_2=\inner{u,P_tv}_2.
\]
These maps extend to contractions on $L_2$. We assume that the extended semigroup is strongly continuous and write $P_t=e^{-tA}$, where $A\ge0$ is self-adjoint. Thus $-A$ is its infinitesimal generator, and unitality gives $A\mathbf1=0$. The associated form domain and real bilinear form are
\[
 \mathcal Q=\Dom(A^{1/2}),\qquad
 \mathcal E(u,v)=\operatorname{Re}\inner{A^{1/2}u,A^{1/2}v}_2,\qquad
 \mathcal E(u)=\mathcal E(u,u).
\]
The space $\mathcal Q_{\mathrm{sa}}$ is a real Hilbert space with norm $\norm u_{\mathcal Q}=(\norm u_2^2+\mathcal E(u))^{1/2}$. Extending $\mathcal E$ to $+\infty$ outside $\mathcal Q$, the spectral theorem gives
\[
 \mathcal E(v)=\lim_{t\downarrow0}\frac1t\inner{v,(I-P_t)v}_2,
 \qquad v\in L_2.
\]
For $z=z^*\in\mathcal Q$, write $z=z_+-z_-$ for its decomposition into positive and negative parts. Positivity and symmetry of $P_t$ give
\[
 \inner{|z|,P_t|z|}_2-\inner{z,P_tz}_2
 =4\inner{z_+,P_tz_-}_2\ge0.
\]
Since $\norm{|z|}_2=\norm z_2$, the preceding limit formula yields
\begin{equation}\label{eq:modulus-form}
 |z|\in\mathcal Q,\qquad \mathcal E(|z|)\le\mathcal E(z).
\end{equation}
For $x\in\mathcal Q_{\mathrm{sa}}$ and $y=y^*\in L_2$, the condition $\mathcal E(x,h)=\operatorname{Re}\inner{y,h}_2$ for every $h\in\mathcal Q_{\mathrm{sa}}$ is equivalent to $x\in\Dom(A)$ and $Ax=y$. 

\subsection*{Fr\'echet differentiation and Lagrange multipliers}
We use the notions of differentiation from \cite[Sections~7.2--7.3]{MR238472}. Let $X$ be a real Banach space and $U\subset X$ be open. A functional $\Phi:U\to\mathbb R$ is Fr\'echet differentiable at $x\in U$ if there is a bounded real linear functional $D\Phi(x):X\to\mathbb R$ such that
\[
 \Phi(x+h)=\Phi(x)+D\Phi(x)[h]+o(\norm h_X)
 \qquad\text{as }\norm h_X\to0.
\]
The notation $D\Phi(x)[h]$ denotes the value of this linear functional at $h$. Fr\'echet differentiability at $x$ implies
\[
 D\Phi(x)[h]=\left.\frac{\mathrm d}{\mathrm dt}\Phi(x+th)\right|_{t=0},\qquad h\in X.
\]
The functional $\Phi$ is $C^1$ if it is Fr\'echet differentiable throughout $U$ and $x\mapsto D\Phi(x)$ is continuous in the norm of the dual space $X^*$.

We use the following scalar-constraint version of the Lagrange multiplier principle \cite[Section~9.3, Theorem~1]{MR238472}, which only requires differentiability of the objective at the extremum.
\begin{theorem}[Lagrange multipliers]\label{thm:lagrange-multipliers}
Let $X$ be a real Banach space, $U\subset X$ open, and $G:U\to\mathbb R$ continuously Fr\'echet differentiable. Suppose that $x_0\in U$ satisfies $G(x_0)=0$ and is a local minimizer or maximizer of $\Phi:U\to\mathbb R$ on $G^{-1}(\{0\})$. If $\Phi$ is Fr\'echet differentiable at $x_0$ and $DG(x_0)\ne0$, then there is a multiplier $\mu\in\mathbb R$ such that
\[
 D\Phi(x_0)[h]+\mu DG(x_0)[h]=0,\qquad h\in X.
\]
\end{theorem}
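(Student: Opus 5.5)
The plan is to reduce the statement to the case of directions tangent to the constraint set, and then handle tangent directions by a two-dimensional implicit function argument. Since $DG(x_0)\ne0$, fix $v\in X$ with $DG(x_0)[v]=1$. Every $h\in X$ decomposes as
\[
 h=\bigl(h-DG(x_0)[h]\,v\bigr)+DG(x_0)[h]\,v,
\]
where the first summand lies in $\ker DG(x_0)$. Suppose we know that $D\Phi(x_0)$ vanishes on $\ker DG(x_0)$. Then linearity gives $D\Phi(x_0)[h]=DG(x_0)[h]\,D\Phi(x_0)[v]$. Hence $\mu=-D\Phi(x_0)[v]$ is the required multiplier. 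So everything reduces to the claim
\[
 D\Phi(x_0)[h]=0\qquad\text{whenever } DG(x_0)[h]=0.
\]

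To prove the claim, fix such an $h$ and consider the real function of two variables
\[
 g(t,s)=G(x_0+th+sv),
\]
defined for small $(t,s)$ since $U$ is open. Because $G$ is $C^1$ and $(t,s)\mapsto x_0+th+sv$ is affine, $g$ is $C^1$ near $(0,0)$. It satisfies $g(0,0)=0$, $\partial_sg(0,0)=DG(x_0)[v]=1$ and $\partial_tg(0,0)=DG(x_0)[h]=0$. The finite-dimensional implicit function theorem then gives a $C^1$ function $s(t)$ near $0$ with $s(0)=0$ and $g(t,s(t))=0$. Its derivative is $s'(0)=-\partial_tg(0,0)/\partial_sg(0,0)=0$, so $s(t)=o(|t|)$. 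Consequently the curve
\[
 x(t)=x_0+th+s(t)v
\]
lies in $G^{-1}(\{0\})\cap U$ and satisfies $\norm{x(t)-x_0}_X=O(|t|)$.

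Now use that $x_0$ is a local minimizer (the maximizer case is symmetric). For small $|t|$ this gives $\Phi(x(t))\ge\Phi(x_0)$. Fréchet differentiability of $\Phi$ at $x_0$ yields
\[
 \Phi(x(t))=\Phi(x_0)+t\,D\Phi(x_0)[h]+s(t)\,D\Phi(x_0)[v]+o(\norm{x(t)-x_0}_X)=\Phi(x_0)+t\,D\Phi(x_0)[h]+o(|t|).
\]
Dividing by $t>0$ and by $t<0$ and letting $t\to0$ gives $D\Phi(x_0)[h]\ge0$ and $D\Phi(x_0)[h]\le0$. Hence $D\Phi(x_0)[h]=0$.

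The main subtle point is this last expansion. The curve $x(t)$ is not a straight line, so Gâteaux differentiability of $\Phi$ alone would not suffice. What is needed is exactly Fréchet differentiability at the single point $x_0$, together with the estimate $\norm{x(t)-x_0}_X=O(|t|)$, so that the remainder is $o(|t|)$. This is why the statement only asks for differentiability of the objective at the extremum, while the constraint must be $C^1$ in order to apply the implicit function theorem.
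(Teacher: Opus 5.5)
Your proof is correct, but it follows a different route from the one behind the paper's statement. The paper gives no proof of its own. It quotes the multiplier rule from \cite[Section~9.3, Theorem~1]{MR238472}, which treats a constraint $H:X\to Z$ with values in a Banach space at a regular point, meaning $H'(x_0)$ is surjective. That proof gets the tangent curves $x(t)=x_0+th+o(t)$ in $\{H=0\}$, for $h\in\ker H'(x_0)$, from Lyusternik's theorem, a generalized inverse function theorem. It then produces the multiplier $z_0^*\in Z^*$ by a closed-range/annihilator argument in the dual.

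You instead use the fact that the constraint is scalar. Your tangent curve comes from the implicit function theorem in $\mathbb R^2$ applied to $(t,s)\mapsto G(x_0+th+sv)$. Your multiplier comes from the explicit splitting $X=\ker DG(x_0)\oplus\mathbb R v$, which gives $\mu=-D\Phi(x_0)[v]$ with no duality theory.

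What your route buys is a short, self-contained argument. It also makes visible the refinement the paper claims when quoting the result: only Fr\'echet differentiability of $\Phi$ at $x_0$ is used, through the bound $\|x(t)-x_0\|=O(|t|)$. This is exactly what the application in Theorem~\ref{thm:variational} needs, since Lemma~\ref{lem:entropy} gives Fr\'echet differentiability of $J_\rho$ but not continuity of its derivative.

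What the cited route buys is generality. It covers constraints with values in an arbitrary Banach space, where a finite-dimensional implicit function argument is not available. Your argument would extend to finitely many constraints with linearly independent derivatives by working in $\mathbb R^{1+m}$.
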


%% file: Framework_Euler.tex
\section{A variational framework in tracial von Neumann algebras}
\label{sec:framework}
In this section, we apply Theorem~\ref{thm:lagrange-multipliers} to a global minimizer of the homogeneous deficit on the $L_2$ unit sphere:
\[
 F_\rho(x)=2\rho\mathcal E(x)-\Ent_\tau(x^2),\qquad x\in\mathcal Q_{\mathrm{sa}}.
\]
We retain the form domain $\mathcal Q=\Dom(A^{1/2})$ and its norm $\norm{\cdot}_{\mathcal Q}=(\norm{\cdot}_2^2+\mathcal E(\cdot))^{1/2}$ from Section~\ref{sec:preliminaries}. We first establish the entropy differentiability needed for the multiplier theorem, then prove the variational criterion and the endpoint representation used to estimate $G_L$.
For $x=x^*\in L_2$, define
\[
 H(x)=\tau(x^2\log x^2),\qquad
 \Ent_\tau(x^2)=H(x)-\norm x_2^2\log\norm x_2^2.
\]
Since $s^2\log s^2\ge-e^{-1}$, the spectral integral defining $H$ is well defined with values in $[-e^{-1},+\infty]$. If $x\in L_p$ for some $p>2$, then $H(x)$ is finite.
\begin{lemma}[Entropy calculus on the form domain]\label{lem:entropy}
Assume that $\mathcal Q_{\mathrm{sa}}$, equipped with $\norm{\cdot}_{\mathcal Q}$, embeds continuously into $L_p$ for some $2<p<\infty$. Then $H:\mathcal Q_{\mathrm{sa}}\to\mathbb R$ is Fr\'echet differentiable, with
\begin{equation}\label{eq:trace-chain-Lp}
 DH(x)[h]=\tau\bigl((2x\log x^2+2x)h\bigr),\qquad x,h\in\mathcal Q_{\mathrm{sa}}.
\end{equation}
Here $DH(x)$ is the Fr\'echet derivative of $H$ at $x$, a bounded real linear functional on $\mathcal Q_{\mathrm{sa}}$, and $DH(x)[h]$ is its value at $h$.

Independently of the embedding assumption, $H$ is lower semicontinuous under strong $L_2$ convergence:
\begin{equation}\label{eq:entropy-lsc}
 x_j\to x\text{ in }L_2,\quad x_j=x_j^*,\ x=x^*
 \quad\Longrightarrow\quad H(x)\le\liminf_jH(x_j).
\end{equation}
\end{lemma}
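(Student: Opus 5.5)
The plan is to reduce everything to scalar inequalities for $\phi(s)=s^2\log s^2$ and then pass them through the spectral calculus of self-adjoint $\tau$-measurable operators. For the differentiability, I will use the operator-convexity-free route via a trace inequality of Taylor type. The scalar function $\phi\in C^1(\mathbb R)$ has $\phi'(s)=2s\log s^2+2s$ and $\phi''(s)=2\log s^2+6$. On $|s|\ge1$ one has $|\phi''(s)|\le C_\varepsilon(1+|s|^{\varepsilon})$, and near $0$ the singularity $\log s^2$ is integrable.

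\textbf{Step 1 (well-definedness and continuity of $DH(x)$).} For $x,h\in\mathcal Q_{\mathrm{sa}}\subset L_p$ the operator $2x\log x^2+2x$ lies in $L_{q}$ for every $q<p$, since $|s\log s^2|\le C(1+|s|^{1+\varepsilon})$. Choose $\varepsilon$ so small that $2x\log x^2\in L_{p'}$ with $1/p'+1/p\le1$; this is possible because $p>2$. Hölder's inequality and the embedding then give $|\tau((2x\log x^2+2x)h)|\le C(x)\norm h_{\mathcal Q}$. Hence the right side of \eqref{eq:trace-chain-Lp} is a bounded real linear functional.

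\textbf{Step 2 (remainder estimate).} This is the main obstacle, since $x$ and $h$ do not commute, so one cannot simply Taylor expand pointwise. I would use the noncommutative mean value/trace Taylor formula for $C^1$ functions:
\[
H(x+h)-H(x)-\tau(\phi'(x)h)=\int_0^1\tau\bigl((\phi'(x+th)-\phi'(x))h\bigr)\,dt .
\]
This identity holds because $t\mapsto\tau(\phi(x+th))$ has derivative $\tau(\phi'(x+th)h)$, by the standard trace derivative formula. I would justify that formula first for bounded $x,h$ via polynomial/divided-difference approximation, and then extend by truncation and the $L_p$ bounds of Step 1. It then suffices to show $\norm{\phi'(x+th)-\phi'(x)}_{p'}\to0$ as $\norm h_{\mathcal Q}\to0$, uniformly in $t$. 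The map $y\mapsto\phi'(y)$ from $L_p$ to $L_{p'}$ is continuous, by a noncommutative Nemytskii-type continuity argument. To prove it, split $\phi'=\psi_1+\psi_2$ with $\psi_1$ Lipschitz-like near $0$ (Hölder continuous, since $s\log s$ is $\alpha$-Hölder) and $\psi_2$ of controlled growth. Then invoke the known operator Hölder estimate $\norm{f(y)-f(z)}_{r}\le C\norm{|y-z|^\alpha}_{r}$ for $\alpha$-Hölder $f$, and the corresponding growth estimate. This gives $o(\norm h_{\mathcal Q})$, hence Fréchet differentiability.

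\textbf{Step 3 (lower semicontinuity \eqref{eq:entropy-lsc}).} Write $\phi=\phi_++\phi_-$, where $\phi_-=\min(\phi,0)$ is bounded (by $-e^{-1}$), continuous and supported on $[-1,1]$, and $\phi_+\ge0$ is continuous. If $x_j\to x$ in $L_2$, then $x_j\to x$ in measure, so by the continuity of the functional calculus in measure $\phi_\pm(x_j)\to\phi_\pm(x)$ in measure. The bounded part satisfies $|\phi_-(s)|\le Cs^2$, which is dominated by $x_j^2$, and $x_j^2$ is uniformly integrable by $L_2$ convergence. Therefore $\tau(\phi_-(x_j))\to\tau(\phi_-(x))$. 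For the positive part, the noncommutative Fatou lemma (lower semicontinuity of $\tau$ under convergence in measure of positive operators) gives $\tau(\phi_+(x))\le\liminf\tau(\phi_+(x_j))$. Adding the two parts yields \eqref{eq:entropy-lsc}.
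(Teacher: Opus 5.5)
Your outline is a legitimate alternative, but Step~2, which you correctly identify as the crux, is not yet a proof. All of the noncommutative difficulty is delegated to assertions.

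First, passing the trace-derivative formula from bounded truncations $x_N,h_N$ to $x,h$ requires $H(x_N+th_N)\to H(x+th)$ and $\tau(\phi'(x_N+th_N)h_N)\to\tau(\phi'(x+th)h)$, uniformly in $t$. That is, you need continuity of $H$ and of $y\mapsto\phi'(y)$ on $L_p$, and the bounds of Step~1 give only boundedness, not convergence.

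Second, that continuity is itself only sketched. You invoke an operator-H\"older theorem of Aleksandrov--Peller type in $L_{p'}$, which is deep and carries exponent restrictions. You also invoke an unspecified ``corresponding growth estimate'' for the part of $\phi'$ whose derivative is unbounded. I am not aware of an off-the-shelf estimate of that kind in $L_r$ for $r\ne2$; even for Lipschitz functions such bounds are deep theorems. So this is exactly where an idea is missing.

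The gap is fixable. Since $p>2$, you can pair in $L_2$ instead of $L_{p'}$, where differences of functional calculi have an exact scalar formula (given below). Alternatively, you can reuse the convergence-in-measure plus uniform-integrability argument from your Step~3.

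The paper's proof is built on that scalar formula from the outset. For self-adjoint $x,y$, let $\mu_{x,y}$ be the joint spectral measure of left multiplication by $x$ and right multiplication by $y$ at the vector $\mathbf 1$. Then $\mu_{x,y}(B\times C)=\tau(E_x(B)E_y(C))\ge0$, and its marginals are the distributions of $x$ and $y$. This gives
\[
 H(y)-H(x)-\tau(f'(x)(y-x))=\int\bigl(f(t)-f(s)-f'(s)(t-s)\bigr)\,\mathrm d\mu_{x,y},\qquad
 \int|t-s|^2\,\mathrm d\mu_{x,y}=\norm{y-x}_2^2,
\]
so noncommutativity disappears. The scalar bound $|f(t)-f(s)-f'(s)(t-s)|\lesssim|t-s|^{3/2}(1+|s|^{p/4}+|t|^{p/4})$ and H\"older's inequality with exponents $4/3$ and $4$ then give the quantitative remainder \eqref{eq:entropy-remainder}, of order $\norm h_{\mathcal Q}^{3/2}$. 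No trace-derivative formula, truncation, or operator-H\"older theory is needed. This explicit estimate is also what the paper later cites for continuity of $H$ in the proof of Theorem~\ref{thm:variational}.

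The same measure gives $\norm{g(x_j)-g(x)}_2\le\operatorname{Lip}(g)\norm{x_j-x}_2$. The paper applies this to the bounded Lipschitz truncations $g_N=\min\{f+e^{-1},N\}$ before letting $N\to\infty$. Your Step~3, via continuity of the functional calculus in measure and the noncommutative Fatou lemma, is correct but heavier.

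One slip there: $|\phi_-(s)|\le Cs^2$ is false near $0$, since $|\phi_-(s)|/s^2=|\log s^2|$ is unbounded. The claim is also unnecessary. Because $\phi_-$ is bounded by $e^{-1}$ and $\tau$ is finite, convergence in measure already yields $\tau(\phi_-(x_j))\to\tau(\phi_-(x))$.
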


\begin{proof}
Put $f(s)=s^2\log s^2$, with $f(0)=f'(0)=0$. For $x,y\in\mathcal Q_{\mathrm{sa}}$, let $\mu_{x,y}$ be the joint scalar measure of left multiplication by $x$ and right multiplication by $y$, with vector $\mathbf1$. Its marginals are the scalar spectral distributions of $x$ and $y$.

We first verify that $\tau(f(x))$ and $\tau(f'(x)h)$ are well defined for $x,h\in\mathcal Q_{\mathrm{sa}}$. The bound $|f(s)|+|f'(s)|^2\le C_p(1+|s|^p)$ gives $f(x),f(y)\in L_1$ and $f'(x),f'(y)\in L_2$. Thus $H(x)=\tau(f(x))$ is finite, and $h\mapsto\tau(f'(x)h)$ is a bounded real linear functional on $\mathcal Q_{\mathrm{sa}}$. Applying \eqref{eqn: joint spectral probability measure} in its integrable quadratic-form sense yields
\[
 H(y)-H(x)-\tau(f'(x)(y-x))
 =\int_{\mathbb R^2}\bigl(f(t)-f(s)-f'(s)(t-s)\bigr)\,\mathrm d\mu_{x,y}.
\]
For $r\ne0$, we have $|f''(r)|\le c_p|r|^{-1/2}(1+|r|^{p/4})$ for a constant $c_p$ depending only on $p$. Assume $s<t$. Since $f'$ is locally absolutely continuous, integration gives
\[
      |f'(t)-f'(s)|\leq c_p\int_{s}^{t}\frac{1+\abs{s}^{p/4}+\abs{t}^{p/4}}{\abs{r}^{\frac{1}{2}}}\dd r\le 2\sqrt{2}c_p|t-s|^{1/2}\bigl(1+|s|^{p/4}+|t|^{p/4}\bigr).
\]
The same bound holds with $s$ and $t$ interchanged. By the mean value theorem, there is a point $\xi$ between $s$ and $t$ such that
\[
      \abs{f(t)-f(s)-f'(s)(t-s)}\leq \abs{f'(\xi)-f'(s)}\abs{t-s}\leq 4\sqrt{2}c_p|t-s|^{3/2}\bigl(1+|s|^{p/4}+|t|^{p/4}\bigr).
\]
Applying H\"older's inequality with exponents $4/3$ and $4$, we obtain
\begin{equation}\label{eq:entropy-remainder}
 |H(y)-H(x)-\tau(f'(x)(y-x))|
 \lesssim \norm{y-x}_2^{3/2}
       \bigl(1+\norm x_p^p+\norm y_p^p\bigr)^{1/4}.
\end{equation}
With $y=x+h$, the continuous embedding makes the remainder $O(\norm h_{\mathcal Q}^{3/2})$ as $h\to0$ in $\mathcal Q_{\mathrm{sa}}$.
This proves \eqref{eq:trace-chain-Lp}.
Equation~\eqref{eq:entropy-remainder} also gives continuity of $H$ under $L_2$ convergence on subsets of $\mathcal Q_{\mathrm{sa}}$ bounded in $L_p$.

Finally, let $x_j=x_j^*\to x=x^*$ in $L_2$ and put $g_N(s)=\min\{f(s)+e^{-1},N\}$ for $N\ge1$. Each $g_N$ is nonnegative, bounded and Lipschitz. Applying the norm identity in \eqref{eqn: joint spectral probability measure} to left multiplication by $x_j$ and right multiplication by $x$, with vector $\mathbf1$, gives
\[
 \norm{g_N(x_j)-g_N(x)}_2
 \le\operatorname{Lip}(g_N)\norm{x_j-x}_2\longrightarrow0.
\]
Consequently, for every $N$,
\[
 \tau(g_N(x))=\lim_{j\to\infty}\tau(g_N(x_j))
 \le\liminf_{j\to\infty}\bigl(H(x_j)+e^{-1}\bigr).
\]
Since $g_N\uparrow f+e^{-1}$, Fatou's lemma applied to the fixed scalar spectral distribution of $x$ yields
\[
 H(x)+e^{-1}\le\liminf_{N\to\infty}\tau(g_N(x))
 \le\liminf_{j\to\infty}\bigl(H(x_j)+e^{-1}\bigr),
\]
which proves \eqref{eq:entropy-lsc}.
\end{proof}

\begin{theorem}[Variational characterization of the logarithmic Sobolev inequality]\label{thm:variational}
Fix $\rho\ge1$. Under the standing assumptions on $(\mathcal M,\tau)$ and $A$, suppose that $\mathcal Q_{\mathrm{sa}}\to L_p(\mathcal M)_{\mathrm{sa}}$ is compact for some $2<p<\infty$. Then the following statements are equivalent:
\begin{equation}\label{eq:variational-LSI}
 \Ent_\tau(x^2)\le2\rho\norm{A^{1/2}x}_2^2
 \qquad\text{for every }x\in\mathcal Q_+;
\end{equation}
\begin{equation}\label{eq:variational-rigidity}
 \rho Ax=x\log x\quad\text{has no solution }x\in\Dom(A)_+
 \text{ with }0<\norm x_2<1.
\end{equation}
\end{theorem}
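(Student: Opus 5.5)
The plan is to prove the two implications separately. The direction \eqref{eq:variational-LSI}$\Rightarrow$\eqref{eq:variational-rigidity} is a direct computation. The converse goes through a constrained minimizer of $F_\rho$ on the $L_2$ unit sphere, Theorem~\ref{thm:lagrange-multipliers}, and a rescaling.

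\emph{Solutions violate the inequality.} Let $x\in\Dom(A)_+$ satisfy $\rho Ax=x\log x$ with $0<\norm x_2<1$. Since $x\in\Dom(A)\subset\mathcal Q$ and $x\log x=\rho Ax\in L_2$, the pairing $\tau(x\cdot x\log x)$ is finite. Pairing the equation with $x$ gives
\[
 \rho\,\mathcal E(x)=\tau(x^2\log x)=\tfrac12 H(x).
\]
Hence
\[
 F_\rho(x)=2\rho\mathcal E(x)-H(x)+\norm x_2^2\log\norm x_2^2=\norm x_2^2\log\norm x_2^2<0,
\]
which contradicts \eqref{eq:variational-LSI}.

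\emph{Failure of the inequality produces a solution.} Assume \eqref{eq:variational-LSI} fails for some $x\in\mathcal Q_+$. Since $F_\rho(cx)=c^2F_\rho(x)$, I normalize and set
\[
 m=\inf\{F_\rho(x):x\in\mathcal Q_{\mathrm{sa}},\ \norm x_2=1\}<0.
\]
The steps are as follows.

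\begin{enumerate}
\item \emph{Coercivity.} The compact embedding is in particular continuous, so $\norm x_p^2\le C(1+\mathcal E(x))$ when $\norm x_2=1$. Log-convexity of $q\mapsto\norm x_q$ (Hölder/Jensen applied to the spectral distribution of $x$) gives
\[
 \Ent_\tau(x^2)\le\tfrac{p}{p-2}\log\norm x_p^2\qquad\text{for }\norm x_2=1 .
\]
Therefore $F_\rho(x)\ge2\rho\mathcal E(x)-\tfrac{p}{p-2}\log\bigl(C(1+\mathcal E(x))\bigr)$. This shows $m>-\infty$, and that every minimizing sequence $(x_j)$ is bounded in $\mathcal Q_{\mathrm{sa}}$.

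\item \emph{Existence of a minimizer.} By compactness of the embedding, pass to a subsequence converging weakly in $\mathcal Q_{\mathrm{sa}}$ and strongly in $L_p$, hence in $L_2$, to some $x_0$ with $\norm{x_0}_2=1$.
 \begin{itemize}
 \item $\mathcal E$ is weakly lower semicontinuous.
 \item $H(x_j)\to H(x_0)$ by the remainder estimate \eqref{eq:entropy-remainder}, since the sequence is $L_p$-bounded and converges in $L_2$.
 \end{itemize}
 Hence $F_\rho(x_0)=m$.

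\item \emph{Positivity of the minimizer.} By \eqref{eq:modulus-form}, $\mathcal E(|x_0|)\le\mathcal E(x_0)$, while the entropy and the $L_2$ norm are unchanged. So $|x_0|$ is also a minimizer, and I replace $x_0$ by it, making $x_0\ge0$.

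\item \emph{Lagrange condition.} Apply Theorem~\ref{thm:lagrange-multipliers} on $X=\mathcal Q_{\mathrm{sa}}$ with constraint $G(x)=\norm x_2^2-1$. This $G$ is $C^1$, and $DG(x_0)[h]=2\tau(x_0h)\ne0$. The functional $F_\rho$ is Fréchet differentiable by Lemma~\ref{lem:entropy}; the $\norm x_2^2\log\norm x_2^2$ term is smooth and contributes $2\tau(x_0h)$ at norm one. Collecting terms gives a real constant $c$ with
\[
 \rho\,\mathcal E(x_0,h)=\operatorname{Re}\inner{x_0\log x_0+cx_0,\,h}_2,\qquad h\in\mathcal Q_{\mathrm{sa}}.
\]
Here $x_0\log x_0\in L_2$ because $x_0\in L_p$ with $p>2$.

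\item \emph{The Euler--Lagrange equation.} The form characterization at the end of Section~\ref{sec:preliminaries} then yields $x_0\in\Dom(A)$ and $\rho Ax_0=x_0\log x_0+cx_0$.

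\item \emph{Identifying $c$.} Pairing with $x_0$ gives $\rho\mathcal E(x_0)=\tfrac12H(x_0)+c$. Since $\norm{x_0}_2=1$, the normalization term vanishes, so $m=F_\rho(x_0)=2\rho\mathcal E(x_0)-H(x_0)=2c$, i.e.\ $c=m/2$.

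\item \emph{Rescaling.} Put $x=e^{m/2}x_0\in\Dom(A)_+$. Then
\[
 x\log x=e^{m/2}\bigl(x_0\log x_0+\tfrac m2x_0\bigr)=\rho Ax ,
\]
and $0<\norm x_2=e^{m/2}<1$. This contradicts \eqref{eq:variational-rigidity}.
\end{enumerate}

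The main obstacle is the existence step (steps 1 and 2): obtaining coercivity with only the $\log$ growth of the entropy against the linear growth of $\mathcal E$, and checking that the entropy passes to the limit. The latter is where compactness into $L_p$ (not merely into $L_2$) is essential, via \eqref{eq:entropy-remainder}. The remaining steps are bookkeeping of derivatives and constants.
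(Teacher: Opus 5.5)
Your proposal is correct and follows essentially the same route as the paper. Both arguments obtain coercivity from $H(z)\le\frac{p}{p-2}\log\norm z_p^2$ on the unit sphere, get a minimizer from compactness together with \eqref{eq:entropy-remainder}, pass to $|x_0|$ via \eqref{eq:modulus-form}, apply the multiplier theorem, identify the multiplier by pairing with $x_0$, and rescale to $e^{m/2}x_0$; the converse is proved by pairing the equation with $x$. The differences are cosmetic: you apply Theorem~\ref{thm:lagrange-multipliers} to $F_\rho$ rather than $J_\rho=2\rho\mathcal E-H$, so the normalization term's derivative is absorbed into the multiplier, and you use the direct method with Jensen where the paper uses a compact sublevel set and the bound $\log r\le r-1$.
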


\begin{proof}
All variations take place in the real Hilbert space $\mathcal Q_{\mathrm{sa}}$. The inclusion into $L_p$ is compact and therefore bounded. Thus there is a constant $C>0$ such that
\begin{equation}\label{eq:variational-Sobolev}
 \norm z_p\le C\norm z_{\mathcal Q}
 =C\bigl(\norm z_2^2+\mathcal E(z)\bigr)^{1/2}.
\end{equation}
Set
\begin{equation}\label{eq:selfadjoint-minimum}
 J_\rho(z)=2\rho\mathcal E(z)-H(z).
\end{equation}
By Lemma~\ref{lem:entropy}, $J_\rho$ is Fr\'echet differentiable on $\mathcal Q_{\mathrm{sa}}$, and $J_\rho=F_\rho$ on the $L_2$ unit sphere. Let
\[
      m=\inf_{\norm{z}_2=1,z\in \mathcal{Q}_{\mathrm{sa}}}J_\rho(z).
\]

We first prove that $m>-\infty$ and that $m$ is attained. Fix $z\in\mathcal Q_{\mathrm{sa}}$ with $\norm z_2=1$. For $s>0$ and $a>0$, the scalar inequality $\log r\le r-1$ gives
\[
s^2\log s^2=\frac{2}{p-2}s^2 \left( \log(\frac{s^{p-2}}{a})+\log a \right)\leq \frac{2}{p-2}s^2\left( \frac{s^{p-2}}{a}-1+\log a \right)=\frac{2}{p-2}\left( \frac{s^p}{a}+(\log a-1)s^2 \right).
\]
The inequality extends to $s=0$ by continuity. Put $a=\norm{z}_p^p>0$. Applying functional calculus to $|z|$ and taking the trace gives
\begin{equation}\label{eq:variational-entropy-bound}
 \begin{aligned}
 H(z)&\le\frac2{p-2}\tau\!\left(\frac{|z|^p}{\norm{z}_p^p}+(\log\norm{z}_p^p-1)|z|^2\right)
 =\frac2{p-2}\log\norm{z}_p^p\\
 &=\frac{p}{p-2}\log\norm z_p^2
 \le\frac{p}{p-2}\log\bigl(C^2(\norm{z}_2^2+\mathcal E(z))\bigr)\\
 &\leq \frac{p}{p-2}\log\left(\frac{C^2p}{p-2}\right)+\frac{p}{p-2}\log\bigl(1+\frac{p-2}{p}\mathcal E(z)\bigr)\\
 &\leq \frac{2p}{p-2}\log\left(\frac{C^2p}{p-2}\right)+\mathcal{E}(z).
 \end{aligned}
\end{equation}
Consequently,
\[
      J_\rho(z)=2\rho\mathcal{E}(z)-H(z)\geq (2\rho-1)\mathcal{E}(z)-\frac{2p}{p-2}\log\left(\frac{C^2p}{p-2}\right).
\]
Thus $J_\rho$ is bounded below on the unit sphere, so $m>-\infty$.

Since $J_\rho(\mathbf1)=0$, we have $m\le0$. The preceding estimate gives
\[
 \{z\in\mathcal Q_{\mathrm{sa}}:\norm z_2=1,\ J_\rho(z)\le0\}
 \subset
 \left\{z\in\mathcal Q_{\mathrm{sa}}:\norm z_2=1,\
 \mathcal E(z)\le
 \frac{2p}{(2\rho-1)(p-2)}
 \log\left(\frac{C^2p}{p-2}\right)\right\}=S.
\]
The set $S$ is bounded in $\mathcal Q_{\mathrm{sa}}$, so the compact inclusion makes it relatively compact in $L_p$. Lower semicontinuity of the closed form and continuity of the $L_2$ norm show that $S$ is closed in $L_p$, hence compact. Equation~\eqref{eq:entropy-remainder} gives continuity of $H$ on $S$, so $J_\rho$ is lower semicontinuous there and attains a minimum. Since $\mathbf1\in S$ and every unit vector outside $S$ has $J_\rho>0$, this minimum equals $m$. By \eqref{eq:modulus-form}, we may therefore choose $x_0\in\mathcal Q_+$ with $\norm{x_0}_2=1$ and $J_\rho(x_0)=m$.

To derive the Lagrange equation, note that the constraint $g(z)=\norm z_2^2-1$ is $C^1$ and regular, since $Dg(x_0)[x_0]=2$. Applying Theorem~\ref{thm:lagrange-multipliers} and reversing the sign of its multiplier gives $\mu\in\mathbb R$ such that
\[
 4\rho\mathcal E(x_0,h)-4\tau((x_0\log x_0)h)
 -2(1+\mu)\tau(x_0h)=0,\qquad h\in\mathcal Q_{\mathrm{sa}}.
\]
Taking $h=x_0$ gives $\mu=m-1$, and hence
\begin{equation}\label{eq:weak-variation}
 \rho\mathcal E(x_0,h)
 =\tau\!\left(\left(x_0\log x_0+\frac m2x_0\right)h\right),
 \qquad h\in\mathcal Q_{\mathrm{sa}}.
\end{equation}
The coefficient on the right belongs to $L_2$ by the growth bound in the proof of Lemma~\ref{lem:entropy}. The characterization of the operator associated with the closed form therefore gives $x_0\in\Dom(A)$ and
\[
      \rho Ax_0=x_0\log x_0+\frac{m}{2}x_0.
\]
Set $x_1=e^{m/2}x_0$. The logarithmic scaling identity yields
\begin{equation}\label{eq:rescaled-Euler}
 \rho Ax_1=x_1\log x_1,\qquad x_1\in\Dom(A)_+,\qquad \norm{x_1}_2=e^{m/2}.
\end{equation}
If \eqref{eq:variational-LSI} fails, homogeneity gives a positive unit vector with $J_\rho<0$. Thus $m<0$, and \eqref{eq:rescaled-Euler} contradicts \eqref{eq:variational-rigidity}. Conversely, pairing a Lagrange solution with itself gives $2\rho\mathcal E(x)=H(x)$, and therefore $F_\rho(x)=\norm x_2^2\log\norm x_2^2<0$ when $0<\norm x_2<1$.
\end{proof}

The following representation applies to a test operator $L=\delta^*\delta$, which may differ from the generator $A$.

\begin{lemma}[Endpoint representation]\label{lem:endpoint}
Let $\mathcal H$ be a Hilbert $\mathcal M$-bimodule with commuting normal unital $*$-representations $\pi_L$ of $\mathcal M$ and $\pi_R$ of $\mathcal M^{\mathrm{op}}$. Let $\delta:\Dom(\delta)\subset L_2(\mathcal M)\to\mathcal H$ be a closed densely defined linear operator, and put $L=\delta^*\delta$. Assume that a unital $*$-subalgebra $\mathcal A_0\subset\mathcal M\cap\Dom(\delta)$ satisfies
\[
 \delta(ab)=\pi_L(a)\delta(b)+\pi_R(b)\delta(a),\qquad a,b\in\mathcal A_0,
\]
and that $(\mathcal A_0)_{\mathrm{sa}}$ is dense in $\Dom(\delta)_{\mathrm{sa}}$ for the norm $(\norm{\cdot}_2^2+\norm{\delta \cdot}_{\mathcal H}^2)^{1/2}$.

For every $x=x^*\in\Dom(L)$, there is a finite positive measure $\mu_x$ on $\sigma(x)^2$ such that
\begin{equation}\label{eq:endpoint}
 \operatorname{Re}\inner{Lx,f(x)}_2
 =\int_{\sigma(x)^2}f^{[1]}(s,t)\,\mathrm d\mu_x(s,t),
 \qquad
 f^{[1]}(s,t)=
 \begin{cases}
 \dfrac{f(s)-f(t)}{s-t},&s\ne t,\\
 f'(s),&s=t.
 \end{cases}
\end{equation}
This holds for every real $C^1$ function $f$ on $\mathbb R$ with bounded derivative, and also for every nondecreasing real $C^1$ function $f$ with $f(x)\in L_2$. For positive $x$, such functions may be defined only on $[0,\infty)$, by extension along the tangent line at zero. In particular,
\begin{equation}\label{eq:massmoment}
 \mu_x(\sigma(x)^2)=\inner{Lx,x}_2,\qquad
 \int_{\sigma(x)^2}(s+t)\,\mathrm d\mu_x(s,t)
 =\operatorname{Re}\inner{Lx,x^2}_2,
\end{equation}
where the second identity requires $x\in L_4$ and its integral is absolutely convergent.
\end{lemma}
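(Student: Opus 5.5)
The plan is to define $\mu_x$ as the joint spectral measure of the commuting self-adjoint operators $\pi_L(x)$ and $\pi_R(x)$ on $\mathcal H$, evaluated at the vector $\delta x$. More precisely, we first reduce to bounded $x$ in $\mathcal A_0$ and then pass to limits.

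\textbf{Step 1: the algebraic case.} Take $a=a^*\in(\mathcal A_0)_{\mathrm{sa}}$ and a polynomial $f$. Iterating the Leibniz rule gives
\[
 \delta(a^k)=\sum_{j=0}^{k-1}\pi_L(a)^j\pi_R(a)^{k-1-j}\delta(a)=p_k^{[1]}(\pi_L(a),\pi_R(a))\,\delta(a),
\]
where $p_k(s)=s^k$. By linearity this yields $\delta(f(a))=f^{[1]}(\pi_L(a),\pi_R(a))\delta(a)$. Let $\nu_a$ be the scalar joint spectral measure of the pair $(\pi_L(a),\pi_R(a))$ at $\delta a$. Its support lies in $\sigma(a)^2$, because $\pi_L,\pi_R$ are unital $*$-representations and so $\sigma(\pi_L(a)),\sigma(\pi_R(a))\subset\sigma(a)$. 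By \eqref{eqn: joint spectral probability measure},
\[
 \inner{\delta a,\delta f(a)}_{\mathcal H}=\int f^{[1]}\,\mathrm d\nu_a .
\]
Weierstrass approximation in $C^1$ on $\sigma(a)$ extends this identity to all real $C^1$ functions $f$. On the left we use $\norm{\delta(f_m(a))-\delta(f(a))}\le\norm{(f_m-f)^{[1]}}_\infty\norm{\delta a}$, and closedness of $\delta$ ensures that $f(a)\in\Dom(\delta)$.

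\textbf{Step 2: approximation.} Now let $x=x^*\in\Dom(L)$. Then $\operatorname{Re}\inner{Lx,f(x)}_2=\operatorname{Re}\inner{\delta x,\delta f(x)}_{\mathcal H}$ whenever $f(x)\in\Dom(\delta)$. By the density hypothesis, choose $a_j\in(\mathcal A_0)_{\mathrm{sa}}$ with $a_j\to x$ in the graph norm of $\delta$. The measures $\nu_{a_j}$ have masses $\norm{\delta a_j}^2$, which are bounded. Applying the norm identity in \eqref{eqn: joint spectral probability measure} to $\pi_L(a_j)-\pi_L(x)$ (these are affiliated unbounded operators, but a truncation handles this), the pairs $(\pi_L(a_j),\pi_R(a_j))$ converge in strong resolvent sense to $(\pi_L(x),\pi_R(x))$, while the vectors $\delta a_j$ converge to $\delta x$. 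Hence $\nu_{a_j}$ converges weakly to $\mu_x$, the joint spectral measure of $(\pi_L(x),\pi_R(x))$ at $\delta x$, and $\mu_x$ is carried by $\sigma(x)^2$.

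For $f$ with bounded derivative, $f^{[1]}$ is bounded and continuous, so the right-hand sides converge. On the left, $f$ is Lipschitz, so $f(a_j)\to f(x)$ in $L_2$ (again by the joint spectral norm identity). Moreover $\delta f(a_j)=f^{[1]}(\cdots)\delta a_j$ is bounded in $\mathcal H$. Closedness of $\delta$ then gives $f(x)\in\Dom(\delta)$ and $\delta f(a_j)\rightharpoonup\delta f(x)$, so the left-hand sides converge as well. This identifies both sides and proves \eqref{eq:endpoint} for $f$ with bounded derivative.

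\textbf{Step 3: monotone unbounded $f$.} Let $f$ be nondecreasing with $f(x)\in L_2$. Truncate by taking $f_N$ with $f_N'=f'\,\chi_N$, where $\chi_N$ is a cutoff so that $0\le f_N'\le f'$ and $f_N\to f$. Then $f_N^{[1]}\ge0$ increases to $f^{[1]}$, and monotone convergence handles the right-hand side. On the left, $\inner{Lx,f_N(x)}_2\to\inner{Lx,f(x)}_2$, since $Lx\in L_2$ and $f_N(x)\to f(x)$ in $L_2$ by dominated convergence in the spectral distribution of $x$.

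For positive $x$, the tangent-line extension at zero keeps $f$ in $C^1$ and does not change $f$ on $\sigma(x)$. The identities \eqref{eq:massmoment} follow by taking $f(s)=s$ and $f(s)=s^2$. For the second, $s^2$ is not monotone, so apply Step 3 separately to $f_\pm(s)=\pm s_\pm^2$, each of which is monotone; $x\in L_4$ makes $x^2\in L_2$. Absolute convergence of the integral comes from the nonnegativity of the two pieces.

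The main obstacle is Step 2: it requires strong-resolvent convergence of the left and right multiplication operators by the unbounded $a_j\to x$, together with closedness of $\delta$ to pass $f(a_j)\to f(x)$ through the derivation. The rest is bookkeeping.
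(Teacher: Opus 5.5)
Your proposal is correct and follows essentially the paper's route. It uses the same measure $\mu_x$ (the joint spectral measure of $(\pi_L(x),\pi_R(x))$ at $\delta x$), the same polynomial chain rule on $(\mathcal A_0)_{\mathrm{sa}}$ extended by graph-norm approximation and closedness of $\delta$, the same derivative truncation for monotone $f$, and the same splitting $s^2=s_+^2-(-s_-^2)$ for the cubic moment. In the truncation, your spatial cutoff works as well as the paper's $\min\{f',m\}$, since $0\le f_N^{[1]}\le f^{[1]}$ with pointwise convergence already suffices by Fatou's lemma. The only difference is that your Step 2 makes explicit the ``standard approximation procedure'' that the paper leaves implicit. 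You pass the scalar identities to the limit via weak convergence of $\nu_{a_j}$, which follows from $\|(a_j-i)^{-1}-(x-i)^{-1}\|_2\to0$, normality of $\pi_L,\pi_R$, and convergence of the masses $\|\delta a_j\|^2$. The paper instead first asserts the vector identity $\delta f(x)=f^{[1]}(\pi_L(x),\pi_R(x))\delta x$.
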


\begin{proof}
Fix $x=x^*\in\Dom(L)$. Let $P_x$ be the joint spectral measure of the commuting left and right actions of $x$ on $\mathcal H$, and set
\[
 \mu_x(\Omega)=\inner{\delta x,P_x(\Omega)\delta x}_{\mathcal H}.
\]
This is a positive measure with mass $\norm{\delta x}_{\mathcal H}^2=\inner{Lx,x}_2$, proving the first identity in \eqref{eq:massmoment}.\par

First fix $a\in(\mathcal A_0)_{\mathrm{sa}}$. Repeatedly applying the Leibniz rule gives
\[
      \delta(a^m)=\sum_{k=0}^{m-1}\pi_L(a)^k\pi_R(a)^{m-1-k}\delta a.
\]
By the scalar identity 
\[
      \frac{s^m-t^m}{s-t}=\sum_{k=0}^{m-1}s^kt^{m-1-k},
\]
We have 
\begin{equation}\label{eqn: chain rule}
      \delta p(a)=p^{[1]}(\pi_L(a),\pi_R(a))\delta a
\end{equation}
for every polynomial $p$. By standard approximation procedure, for $x\in \Dom(L)$ and function $f$ which $f'$ is bounded, we have $f(x)\in \Dom(\delta)$ and 
\begin{equation}\label{eq:endpoint-chain-rule}
 \delta f(x)=f^{[1]}(\pi_L(x),\pi_R(x))\delta x
 =\left(\int_{\sigma(x)^2}f^{[1]}(s,t)\,\mathrm dP_x(s,t)\right)\delta x.
\end{equation}
Therefore, we have
\[
 \begin{aligned}
 \operatorname{Re}\inner{Lx,f(x)}_2
 &=\operatorname{Re}\inner{\delta x,\delta f(x)}_{\mathcal H}\\
 &=\inner{\delta x,f^{[1]}(\pi_L(x),\pi_R(x))\delta x}_{\mathcal H}\\
 &=\int_{\sigma(x)^2}f^{[1]}(s,t)\,\mathrm d\mu_x(s,t).
 \end{aligned}
\]
The middle expression is real because $f^{[1]}$ is real-valued.

For nondecreasing $f$ with $f(x)\in L_2$, set
\[
 f_m(s)=f(0)+\int_0^s\min\{f'(r),m\}\,\mathrm dr.
\]
Then $f_m(x)\to f(x)$ in $L_2$ by dominated convergence, since $|f_m(s)|\le|f(0)|+|f(s)|$, and $f_m^{[1]}\uparrow f^{[1]}$. Applying the bounded-derivative case and monotone convergence yields
\[
 \operatorname{Re}\inner{Lx,f(x)}_2
 =\lim_{m\to\infty}\int_{\sigma(x)^2}f_m^{[1]}\,\mathrm d\mu_x
 =\int_{\sigma(x)^2}f^{[1]}\,\mathrm d\mu_x.
\]
This proves \eqref{eq:endpoint} without requiring $f(x)\in\Dom(\delta)$ in the monotone case.

If $x\in L_4$, apply the monotone case to the nondecreasing $C^1$ functions
\[
 g_+(s)=(\max\{s,0\})^2,\qquad g_-(s)=-(\max\{-s,0\})^2.
\]
Their divided differences are nonnegative and integrable, and
\[
 g_+^{[1]}(s,t)-g_-^{[1]}(s,t)=s+t,\qquad
 |s+t|\le g_+^{[1]}(s,t)+g_-^{[1]}(s,t).
\]
Thus $s+t$ is absolutely integrable, and subtracting the two representations gives the second identity in \eqref{eq:massmoment}. The half-line variant follows by extension along the tangent line at zero.
\end{proof}

%% file: Free_group_setting.tex
\section{The free-group setting and reduction}\label{sec:free-group-settings}

Throughout this section, $n\in\mathbb N$ and $\mathcal M=\mathcal L(\mathbb F_n)$, equipped with its canonical tracial state and word-length generator $A\lambda_g=|g|\lambda_g$. The Poisson semigroup $P_t\lambda_g=e^{-t|g|}\lambda_g$ is symmetric, unital, trace-preserving, and Markov; see \cite[Lemmas~1.1--1.2]{MR520930}. The case $n=1$ follows by restriction from $\mathbb F_2$. We verify the hypotheses of Theorem~\ref{thm:variational} and then reduce the LSI to a cubic Fourier estimate. The extension to $\mathbb F_\infty$ is given in Remark~\ref{rem:variational-domain}.

For $k\ge0$, write
\[
 S_k=\{g\in\mathbb F_n:|g|=k\},\qquad
 \Pi_kv=\sum_{g\in S_k}\widehat v(g)\lambda_g.
\]
Shell components of different lengths are orthogonal in $L_2$. An element $v$ is self-adjoint if and only if $\widehat v(g^{-1})=\overline{\widehat v(g)}$, so its shell components are then self-adjoint as well.

\subsection{The word-length derivation}
We use Haagerup's tree construction in \cite[Proof of Lemma~1.2]{MR520930}. Orient the edges of the Cayley tree from $h$ to $hs_j$, where $s_1,\ldots,s_n$ are the generators. Let $\mathsf E_{\mathbb R}$ have orthonormal basis $\varepsilon_{h,j}$ indexed by these edges; reversing an edge changes its sign. Left translation defines $\pi(g)\varepsilon_{h,j}=\varepsilon_{gh,j}$. If $c(g)$ is the signed sum of the edges on the path from $e$ to $g$, cancellation of backtracking edges gives
\begin{equation}\label{eq:cocycle}
 c(gh)=c(g)+\pi(g)c(h),\qquad \norm{c(g)}^2=|g|.
\end{equation}
Following \cite[Example~2.2, pp.~421--422]{MR2470111}, let $\mathsf E_{\mathbb C}$ be the complexification of $\mathsf E_{\mathbb R}$ and equip $\mathcal H=\mathsf E_{\mathbb C}\otimes L_2(\mathcal M,\tau)$ with the actions
\begin{equation}\label{eq:actions}
 \begin{aligned}
 \lambda_g\cdot(\xi\otimes\lambda_h)
     &=\pi(g)\xi\otimes\lambda_{gh},\\
 (\xi\otimes\lambda_h)\cdot\lambda_k
     &=\xi\otimes\lambda_{hk}.
 \end{aligned}
\end{equation}
These actions commute. The right action is amplified right multiplication, while the unitary $W(\xi\otimes\lambda_h)=\pi(h^{-1})\xi\otimes\lambda_h$ conjugates the left action to amplified left multiplication. Thus both actions extend normally to $\mathcal M$, making $\mathcal H$ a Hilbert $\mathcal M$-bimodule.

Define $\delta$ on $\mathbb C[\mathbb F_n]$ by extending $\delta(\lambda_g)=c(g)\otimes\lambda_g$ linearly. The cocycle identity gives
\[
 \delta(vw)=v\cdot\delta w+\delta v\cdot w,
 \qquad v,w\in\mathbb C[\mathbb F_n],
\]
and
\begin{equation}\label{eq:dirichlet}
 \inner{\delta v,\delta w}_{\mathcal H}
   =\sum_g|g|\,\overline{\widehat v(g)}\widehat w(g)
   =\inner{Av,w}_2.
\end{equation}
Orthogonality of the Fourier basis shows that $\delta$ is closable. Denoting its closure by the same symbol, we have
\[
 \Dom(\delta)=
 \left\{x\in L_2(\mathcal M,\tau):
       \sum_{g\in\mathbb F_n}|g|\,|\widehat x(g)|^2<\infty\right\}
 =\Dom(A^{1/2}),\qquad \delta^*\delta=A.
\]
In particular, $\ker\delta=\mathbb C\mathbf1$. This realizes the word-length form in the framework of Cipriani and Sauvageot \cite{MR1986156}. Self-adjoint Fourier polynomials are dense in $\Dom(\delta)_{\mathrm{sa}}$ for the norm $(\norm{\cdot}_2^2+\norm{\delta\cdot}_{\mathcal H}^2)^{1/2}$, so the assumption of Lemma~\ref{lem:endpoint} holds with $\mathcal A_0=\mathbb C[\mathbb F_n]$.

\subsection{Sobolev embeddings}
The following lemma verifies the compact embedding assumption of Theorem~\ref{thm:variational}.
\begin{lemma}\label{lem:compactness}
For every $n\in\mathbb N$ and $2\le p<3$, equip $\mathcal Q=\Dom(A^{1/2})$ with the norm
\[
 \norm x_{\mathcal Q}
 =\left(\norm x_2^2+\norm{A^{1/2}x}_2^2\right)^{1/2}
 =\norm{(1+A)^{1/2}x}_2.
\]
Then the inclusion
\begin{equation}\label{eq:compact-form}
 \mathcal{Q}\longrightarrow L_p(\mathcal M)
\end{equation}
is continuous and compact. Moreover,
\begin{equation}\label{eq:Sobolev-bounds}
 \norm x_4\le C\norm{(1+A)x}_2,\qquad x\in\Dom(A).
\end{equation}
\end{lemma}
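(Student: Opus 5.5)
The plan is to work shell by shell, using Haagerup's inequality as the basic tool. For $u=\Pi_k u$ supported on $S_k$, Haagerup's inequality gives $\norm{u}_{\infty}\le (k+1)\norm u_2$. For the $L_4$ bound we use $\norm{u}_4^2=\norm{u^*u}_2$, with the product $u^*u$ split according to the cancellation length. I first treat $p=4$ in \eqref{eq:Sobolev-bounds} and then get continuity and compactness of \eqref{eq:compact-form} for $p<3$ from an interpolation-type shell estimate.

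\textbf{Step 1: the shell estimate.} The target is $\norm{\Pi_k x}_p\le C_p (k+1)^{\theta_p}\norm{\Pi_kx}_2$ with $\theta_p<1/2$ for $p<3$. The idea is to interpolate between $L_2$, where $\theta=0$, and a sharper $L_4$ estimate obtained from Haagerup's convolution inequality. For $u,v$ supported on $S_k$, write $u^*v=\sum_{j=0}^{k}w_j$, where $w_j$ collects the products with exactly $j$ cancellations; $w_j$ is supported on $S_{2k-2j}$. The cancellation estimate (Haagerup's lemma) gives $\norm{w_j}_2\le\norm u_2\norm v_2$, and orthogonality of the different shells then yields $\norm{u^*u}_2\le (k+1)^{1/2}\norm u_2^2$. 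Hence $\norm u_4\le (k+1)^{1/4}\norm u_2$. Riesz--Thorin between $p=2$ and $p=4$ on the shell-$k$ subspace (a complemented $L_p$-type situation) is awkward, so instead I would apply Hölder directly: $\norm u_p\le \norm u_2^{1-\alpha}\norm u_4^{\alpha}$ with $1/p=(1-\alpha)/2+\alpha/4$. This gives $\theta_p=\alpha/4=(1/2-1/p)$, which is below $1/2$ for every $p<\infty$.

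\textbf{Step 2: summing the shells.} Write $x=\sum_k\Pi_kx$ and use the triangle inequality, $\norm x_p\le\sum_k (k+1)^{\theta_p}\norm{\Pi_kx}_2$. Cauchy--Schwarz with weight $(1+k)$ turns this into
\[
\Bigl(\sum_k (k+1)^{2\theta_p-1}\Bigr)^{1/2}\norm{(1+A)^{1/2}x}_2 .
\]
The sum converges iff $2\theta_p-1<-1$, i.e. $1/2-1/p<0$, which fails. So the triangle inequality is too lossy, and this is the main obstacle. The fix is to estimate $\norm{x}_4^2=\norm{x^*x}_2$ globally. Expanding $x^*x=\sum_{k,l}\Pi_k x^*\,\Pi_l x$ and decomposing each term by cancellation length, the piece with $j$ cancellations lies in shell $k+l-2j$. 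Orthogonality across output shells, combined with Haagerup's cancellation bound, gives
\[
\norm{x^*x}_2\lesssim \sum_m\Bigl(\sum_{k+l-2j=m}\norm{\Pi_kx}_2\norm{\Pi_lx}_2\Bigr)^2{}^{1/2}.
\]
A discrete Young/Cauchy--Schwarz step then bounds this by $\bigl(\sum_k (1+k)^{s}\norm{\Pi_kx}_2^2\bigr)$ for some $s$. I expect $s=2$ to suffice, which gives \eqref{eq:Sobolev-bounds}. For $p<3$ the same computation with the weight $(1+k)$ should close after interpolating between $\norm{\cdot}_2$ and this weighted $L_4$ estimate; the threshold $p<3$ is exactly where the exponent count works with only $(1+A)^{1/2}$. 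If the direct count is delicate, I would prove the $L_4$ estimate with an $(1+A)^{\beta}$ weight for $\beta$ slightly above $3/4$. Interpolation for the complex powers of $1+A$ then gives $\mathcal Q\to L_p$ boundedly for $1/p>1/2-\tfrac{1}{2}\cdot\tfrac{1/2}{\beta}\cdot\tfrac12\cdot2$, and the computation should yield exactly $p<3$.

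\textbf{Step 3: compactness.} Each truncation $T_N=\sum_{k\le N}\Pi_k$ has finite rank on $\mathcal Q$, because the shells are finite for finite $n$. For fixed $p<3$, choose $p<q<3$. Then $\norm{(I-T_N)x}_p\le\norm{(I-T_N)x}_2^{1-\alpha}\norm{(I-T_N)x}_q^{\alpha}$. The $L_q$ factor is bounded uniformly by Step 2, and $\norm{(I-T_N)x}_2\le (N+1)^{-1/2}\norm x_{\mathcal Q}$. Hence $T_N\to I$ in operator norm from $\mathcal Q$ to $L_p$, and the inclusion is compact as a norm limit of finite-rank operators. The case $n=1$ follows by restriction from $\mathbb F_2$, as noted at the start of the section.
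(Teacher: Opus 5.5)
Your final route (a global $L_4$ bound from the cancellation estimate, then complex interpolation along powers of $1+A$) works and differs from the paper's. Two steps still have to be made correct.

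First, the count you leave as ``I expect'' must be carried out. It closes by a crude Cauchy--Schwarz. With $a_k=\norm{\Pi_kx}_2$, \eqref{eq:projection} gives $\norm{\Pi_m(x^*x)}_2\le c_m=\sum_{(k,l)\in R_m}a_ka_l$, where $R_m$ consists of the pairs $(k,l)$ with $|k-l|\le m\le k+l$ and $k+l+m$ even. Write $a_ka_l=[(1+k)^\beta(1+l)^\beta a_ka_l]\,[(1+k)^{-\beta}(1+l)^{-\beta}]$ and use that each pair lies in exactly $\min(k,l)+1$ of the sets $R_m$. Then
\[
 \norm x_4^4\le\sum_mc_m^2\le\Bigl(\sum_k(1+k)^{2\beta}a_k^2\Bigr)^2\sum_{k,l}\frac{\min(k,l)+1}{(1+k)^{2\beta}(1+l)^{2\beta}},
\]
and the last series converges exactly when $\beta>3/4$. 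Taking $\beta=1$ gives \eqref{eq:Sobolev-bounds}. For $\beta=1/2$ the series diverges, so this count yields no $L_4$ bound from $\mathcal Q$. The passage to $p<3$ must therefore go through interpolation; it is the actual argument, not a fallback.

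Second, your threshold is off by a factor of two. Interpolating $(1+A)^{-\beta z}$ between $L_2\to L_2$ at $\operatorname{Re}z=0$ and $L_2\to L_4$ at $\operatorname{Re}z=1$, you need $\vartheta=1/(2\beta)$. This gives $1/p=1/2-\vartheta/4=1/2-1/(8\beta)\downarrow1/3$ as $\beta\downarrow3/4$. Your displayed expression equals $1/2-1/(4\beta)$, which tends to $1/6$ and would claim $p<6$. You should also cite that $[L_2,L_4]_\vartheta=L_p$ holds for noncommutative $L_p$ spaces. Your compactness step is correct: H\"older between the $L_2$ tail decay $(N+1)^{-1/2}$ and a uniform $L_q$ bound with $p<q<3$.

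The paper avoids both the expansion of $x^*x$ and complex interpolation. It groups shells into dyadic blocks $v_j=\sum_{2^j\le k<2^{j+1}}\Pi_kx$ and combines Haagerup's $L_\infty$ inequality with Cauchy--Schwarz inside each block, getting $\norm{v_j}_\infty\le2^{1+3j/2}\norm{v_j}_2$. H\"older then gives $\norm{v_j}_p\lesssim2^{\alpha_pj}\norm{v_j}_2$ with $\alpha_p=3/2-3/p$, and the triangle inequality across blocks costs only a geometric series. That single estimate yields both the $p<3$ embedding and \eqref{eq:Sobolev-bounds}, works for every $p$, and its tail bound $2^{-(s-\alpha_p)N}$ gives compactness at once.

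Dyadic grouping is exactly what removes the obstacle you found in Step 2. Your own Step 1 bound $\norm{\Pi_kx}_4\le(k+1)^{1/4}\norm{\Pi_kx}_2$, summed over a block with Cauchy--Schwarz, already gives $\norm{v_j}_4\lesssim2^{3j/4}\norm{v_j}_2$, the same exponent as the paper. H\"older between $L_2$ and $L_4$ plus geometric summation then finishes $p<3$ with no interpolation theory. What your route buys is that it uses only the $L_2$ cancellation estimate rather than Haagerup's $L_\infty$ inequality, while reaching the same threshold $\beta>3/4$.
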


\begin{proof}
For a Fourier polynomial $x$, put $v_j=\sum_{2^j\le k<2^{j+1}}\Pi_kx$, $j\ge0$. Haagerup's inequality \cite[Lemma~1.4, pp.~285--286]{MR520930} and Cauchy--Schwarz give
\[
 \begin{aligned}
 \norm{v_j}_\infty
 &\le\sum_{k=2^j}^{2^{j+1}-1}\norm{\Pi_kx}_\infty
 \le\sum_{k=2^j}^{2^{j+1}-1}(k+1)\norm{\Pi_kx}_2\\
 &\le\left(\sum_{k=2^j}^{2^{j+1}-1}(k+1)^2\right)^{1/2}
       \left(\sum_{k=2^j}^{2^{j+1}-1}\norm{\Pi_kx}_2^2\right)^{1/2}\\
 &\le\left(2^j\,2^{2j+2}\right)^{1/2}\norm{v_j}_2
 =2^{1+3j/2}\norm{v_j}_2.
 \end{aligned}
\]
For $2\le p<\infty$, H\"older's inequality therefore yields
\[
 \norm{v_j}_p
 \le\norm{v_j}_\infty^{1-2/p}\norm{v_j}_2^{2/p}
 \le2^{1-2/p}2^{\alpha_pj}\norm{v_j}_2,
 \qquad \alpha_p=\frac32-\frac3p.
\]
Let $s>\alpha_p$ and let $N\ge0$ be an integer. A second application of Cauchy--Schwarz gives
\begin{equation}\label{eq:dyadic-form-tail}
 \begin{aligned}
 \left\|\sum_{j\ge N}v_j\right\|_p
 &\le2^{1-2/p}\sum_{j\ge N}2^{-(s-\alpha_p)j}\,2^{sj}\norm{v_j}_2\\
 &\le2^{1-2/p}
 \left(\sum_{j\ge N}2^{-2(s-\alpha_p)j}\right)^{1/2}
 \left(\sum_{j\ge N}2^{2sj}\norm{v_j}_2^2\right)^{1/2}\\
 &=\frac{2^{1-2/p}\,2^{-(s-\alpha_p)N}}
 {\sqrt{1-2^{-2(s-\alpha_p)}}} \left(\sum_{j\ge N}\sum_{k=2^j}^{2^{j+1}-1}2^{2sj}\norm{\Pi_k x}_2^2\right)^{1/2}\\
 &\le\frac{2^{1-2/p}\,2^{-(s-\alpha_p)N}}
 {\sqrt{1-2^{-2(s-\alpha_p)}}} \left(\sum_{k\geq 2^N}(k^{s}\norm{\Pi_kx}_2)^2\right)^{1/2}\\
 &\le\frac{2^{1-2/p}\,2^{-(s-\alpha_p)N}} {\sqrt{1-2^{-2(s-\alpha_p)}}} \norm{A^sx}_2.
 \end{aligned}
\end{equation}

Cauchy--Schwarz gives $|\tau(x)|\le\norm x_2$ because $\norm{\mathbf1}_2=1$. The spectral weights $(1+k)^{2s}$ dominate both $1$ and $k^{2s}$, so
\[
      \abs{\tau(x)}\leq \norm{x}_2\leq \norm{(1+A)^sx}_2,\qquad \norm{A^sx}_2\leq \norm{(1+A)^sx}_2.
\]
Since $x=\tau(x)\mathbf1+\sum_{j\ge0}v_j$ and $\norm{\mathbf1}_p=1$, the triangle inequality and \eqref{eq:dyadic-form-tail} with $N=0$ give
\[
 \norm x_p \le|\tau(x)|+\frac{2^{1-2/p}}{\sqrt{1-2^{-2(s-\alpha_p)}}}\norm{A^s x}_2
 \le\left(1+\frac{2^{1-2/p}}{\sqrt{1-2^{-2(s-\alpha_p)}}}\right)
       \norm{(1+A)^s x}_2.
\]
Fourier polynomials are dense in $\Dom((1+A)^s)$ for the norm $\norm{(1+A)^s x}_2$, so this estimate extends to a bounded inclusion $I:\Dom((1+A)^s)\to L_p$.

For compactness, put $T_N=\sum_{k<2^N}\Pi_k$. Its range is spanned by the finitely many $\lambda_g$ with $|g|<2^N$, since $n<\infty$. Moreover, $\norm{(1+A)^sT_Nx}_2\le\norm{(1+A)^s x}_2$, so the preceding embedding estimate makes $T_N$ a bounded map into $L_p$. Equation~\eqref{eq:dyadic-form-tail} gives
\[
 \begin{aligned}
 \norm{I-T_N}_{\Dom((1+A)^s)\to L_p}
 &=\sup_{\norm{(1+A)^s x}_2\le1}\norm{(I-T_N)x}_p\\
 &\le\frac{2^{1-2/p}\,2^{-(s-\alpha_p)N}}
 {\sqrt{1-2^{-2(s-\alpha_p)}}}\longrightarrow0.
 \end{aligned}
\]
Thus $T_N$ converges to $I$ uniformly on the domain unit ball. As an operator-norm limit of finite-rank maps, the inclusion is compact.

For $s=1/2$, the domain norm is exactly the form norm:
\[
 \norm{(1+A)^{1/2}x}_2^2=\norm x_2^2+\mathcal E(x)=\norm x_{\mathcal Q}^2.
\]
Since $s-\alpha_p=3/p-1>0$ precisely when $p<3$, this proves \eqref{eq:compact-form}. Taking $s=1$ and $p=4$ gives $s-\alpha_p=1/4$, so \eqref{eq:Sobolev-bounds} holds with $C=1+\sqrt2/\sqrt{1-2^{-1/2}}$.
\end{proof}

\subsection{Reduction to a polynomial inequality}\label{sec:reduction}
We use the functionals $K$ and $G$ from \eqref{eq:KG} to exclude positive solutions of $Ax=x\log x$ with $0<\norm x_2<1$. By Theorem~\ref{thm:variational} with $\rho=1$, this will imply the LSI \eqref{eq:LSI}. The bound \eqref{eq:Sobolev-bounds} gives $x\in L_4$ for every $x\in\Dom(A)$, and hence $x^2,x\log x\in L_2$ when $x\ge0$. Thus the cubic pairing and the functionals $K(x)$ and $G(x)$ are well defined on $\Dom(A)_+$.

\begin{lemma}[The endpoint transform]\label{lem:gap}
For nonconstant $x\in\Dom(A)_+$, put
\[
      z=\frac{\Re\inner{Ax,x^2}_2}{2\inner{Ax,x}_2}.
\]
Then $z>0$ and
\begin{equation}\label{eq:gapbound}
 G(x)\ge2\inner{Ax,x}_2(z-1-\log z).
\end{equation}
Consequently,
\begin{equation}\label{eq:transformed-bound}
 K(x)+G(x)\ge2\inner{Ax,x}_2\left(\frac{\norm{Ax}_2^2}{\inner{Ax,x}_2}-1-\log z\right).
\end{equation}
\end{lemma}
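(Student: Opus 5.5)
The plan is to apply the endpoint representation, Lemma~\ref{lem:endpoint}, with $L=A$ (legitimate by the word-length derivation above) and $f(s)=s^2-2s\log s$ on $[0,\infty)$. Then reduce $G(x)$ to an integral of the divided difference $f^{[1]}$, bound $f^{[1]}$ below pointwise by a convex function of $(s+t)/2$, and finish with Jensen's inequality.

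\textbf{Step 1: the representation holds for this $f$.} We have $f'(s)=2(s-1-\log s)\ge0$, so $f$ is nondecreasing. Also $f(x)\in L_2$, since $x\in L_4$ by \eqref{eq:Sobolev-bounds}. The only defect is that $f'(s)\to+\infty$ as $s\downarrow0$, so $f$ is not $C^1$ at $0$. I would handle this exactly as in the monotone part of the proof of Lemma~\ref{lem:endpoint}. Use the truncations
\[
 f_m(s)=f(1)+\int_1^s\min\{f'(r),m\}\,\mathrm dr .
\]
These have bounded derivative, satisfy $f_m(x)\to f(x)$ in $L_2$ by dominated convergence, and satisfy $f_m^{[1]}\uparrow f^{[1]}$. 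Monotone convergence then gives
\[
 G(x)=\int f^{[1]}\,\mathrm d\mu_x ,\qquad f^{[1]}\ge0,
\]
and this integral is finite because the left-hand side is.

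\textbf{Step 2: scalar lower bound.} For $s\ne t$ in $[0,\infty)$,
\[
 \frac{s\log s-t\log t}{s-t}=\int_0^1\bigl(1+\log(\theta s+(1-\theta)t)\bigr)\,\mathrm d\theta
 \le 1+\log\frac{s+t}{2},
\]
by concavity of $\log$ (Jensen in $\theta$). The same bound holds on the diagonal $s=t$. Writing $u=(s+t)/2$, this gives
\[
 f^{[1]}(s,t)\ge 2u-2-2\log u=2\varphi(u),\qquad \varphi(u)=u-1-\log u ,
\]
where $\varphi$ is convex on $(0,\infty)$ and $\varphi(0)=+\infty$.

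\textbf{Step 3: Jensen over $\mu_x$, and $z>0$.} Since $x$ is nonconstant and $\ker\delta=\mathbb C\mathbf1$, the mass is $M=\mu_x(\sigma(x)^2)=\inner{Ax,x}_2>0$. By \eqref{eq:massmoment}, the mean of $u$ under the probability measure $\mu_x/M$ equals $z$. Since $u\ge0$, we get $z\ge0$. Moreover $z=0$ would force $\mu_x$ to be concentrated at $(0,0)$, where $\varphi=+\infty$. That would make $\int 2\varphi(u)\,\mathrm d\mu_x$, and hence $G(x)$, infinite, contradicting finiteness. So $z>0$, and $\mu_x$ assigns zero mass to $\{u=0\}$. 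Jensen's inequality for the convex $\varphi$ then gives
\[
 G(x)\ge 2M\int\varphi(u)\,\frac{\mathrm d\mu_x}{M}\ge 2M\varphi(z),
\]
which is \eqref{eq:gapbound}.

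\textbf{Step 4: the consequence.} By definition,
\[
 K(x)=2\norm{Ax}_2^2-\operatorname{Re}\inner{Ax,x^2}_2=2\norm{Ax}_2^2-2Mz .
\]
Adding \eqref{eq:gapbound}, the terms $\pm2Mz$ cancel, and what remains is exactly \eqref{eq:transformed-bound}.

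The main obstacle is the endpoint singularity at $s=0$ in Step~1, together with the degenerate case $z=0$ in Step~3. Both are controlled by monotone convergence and by the finiteness of $G(x)$; the scalar estimate itself is routine.
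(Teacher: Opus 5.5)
Your proof is correct. Its skeleton is the paper's: express $G(x)$ through Lemma~\ref{lem:endpoint} with $f(s)=s^2-2s\log s$, use $f'=2\varphi$ with $\varphi(s)=s-1-\log s$ convex, and apply Jensen against $\mu_x/\langle Ax,x\rangle_2$, under which $(s+t)/2$ has mean $z$. Your pointwise bound $f^{[1]}(s,t)\ge2\varphi((s+t)/2)$ is the paper's joint Jensen on $\mu_x\otimes\mathrm dr$, carried out in two stages.

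The real differences are in how you handle the singularity $f'(0^+)=+\infty$ and in how you prove $z>0$.
\begin{itemize}
\item The paper shifts to $f_\varepsilon(s)=f(s+\varepsilon)$, so every integrand stays finite. It proves $\operatorname{Re}\langle Ax,f_\varepsilon(x)\rangle_2\ge2\langle Ax,x\rangle_2\varphi(z+\varepsilon)$ and lets $\varepsilon\downarrow0$ only on the left, via $f_\varepsilon(x)\to f(x)$ in $L_2$.
\item The paper excludes $z=0$ separately. A measure concentrated at $(0,0)$ forces $\delta g(x)=0$ for $g(s)=\sqrt{1+s^2}-1$, and then $\ker\delta=\mathbb C\mathbf 1$ and injectivity of $g$ on $[0,\infty)$ give the contradiction.
\item You instead truncate the derivative and pass to the limit inside the representation. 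This gives the exact identity $G(x)=\int f^{[1]}\,\mathrm d\mu_x$ with $f^{[1]}(0,0)=\lim_m f_m'(0)=+\infty$.
\item Finiteness of $G(x)$ then shows that $\mu_x$ does not charge the origin. This one observation gives $z>0$ without a second appeal to $\ker\delta$ or to a chain rule for an auxiliary function. It is also exactly what justifies Jensen for a $\varphi$ that blows up at $0$.
\end{itemize}

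Your route is more self-contained and yields the slightly stronger fact $\mu_x(\{(0,0)\})=0$. The cost is working with an extended-valued integrand. You should state the convention $f^{[1]}(0,0)=+\infty$ explicitly, since the diagonal case of your Step~2 at $s=t=0$ and the monotone limit in Step~1 both rely on it.

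Incidentally, the paper's $\varepsilon$-bound would also rule out $z=0$ directly. If $z=0$, it would give $G(x)\ge\lim_{\varepsilon\downarrow0}2\langle Ax,x\rangle_2\varphi(\varepsilon)=+\infty$, contradicting finiteness of $G(x)$.
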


\begin{proof}
Since $\ker\delta=\mathbb C\mathbf1$ and $x$ is nonconstant, $\inner{Ax,x}_2=\norm{\delta x}_{\mathcal H}^2>0$. The Sobolev bound gives $x\in L_4$, so Lemma~\ref{lem:endpoint} applies directly and yields
\begin{equation}\label{eq:cubic-pairing-positive}
 \operatorname{Re}\inner{Ax,x^2}_2
 =\int_{\sigma(x)^2}(s+t)\,\mathrm d\mu_x(s,t)\ge0.
\end{equation}
Thus $z\ge0$. If $z=0$, the nonnegativity of $s$ and $t$ on $\sigma(x)^2$ forces $\mu_x$ to be concentrated at $(0,0)$. Take $g(s)=\sqrt{1+s^2}-1$. This function is $C^1$ with bounded derivative, satisfies $g'(0)=0$, and is strictly increasing on $[0,\infty)$. The chain rule in the proof of Lemma~\ref{lem:endpoint} gives
\[
 \norm{\delta g(x)}_{\mathcal H}^2
 =\int_{\sigma(x)^2}|g^{[1]}(s,t)|^2\,\mathrm d\mu_x(s,t)=0.
\]
Hence $g(x)$ is scalar because $\ker\delta=\mathbb C\mathbf1$. Since $g$ is injective on $[0,\infty)$, $x$ is scalar as well, a contradiction. Therefore $z>0$.

Put $\varphi(s)=s-1-\log s$ and $f(s)=s^2-2s\log s$ for $s>0$, with $f(0)=0$. To apply the endpoint formula at zero, let $f_\varepsilon(s)=f(s+\varepsilon)$ for $\varepsilon>0$. Then $f_\varepsilon$ is $C^1$ on $[0,\infty)$ and $f_\varepsilon'(s)=2\varphi(s+\varepsilon)\ge0$. Since $f_\varepsilon(x)\in L_2$, Lemma~\ref{lem:endpoint} gives
\[
 \operatorname{Re}\inner{Ax,f_\varepsilon(x)}_2
 =2\int_{\sigma(x)^2}\int_0^1\varphi((1-r)s+rt+\varepsilon)\,\mathrm dr\,\mathrm d\mu_x(s,t).
\]
Since $\mu_x\otimes\mathrm dr/\inner{Ax,x}_2$ is a probability measure, Jensen's inequality for the convex function $\varphi$, together with \eqref{eq:massmoment}, yields
\[
 \begin{split}
      \Re\inner{Ax,f_\varepsilon(x)}_2&\ge 2\inner{Ax,x}_2\varphi\left( \frac{1}{\inner{Ax,x}_2}\int_{\sigma(x)^2}\int_0^1\bigl((1-r)s+rt+\varepsilon\bigr)\,\mathrm dr\,\mathrm d\mu_x(s,t) \right)\\
      &=2\inner{Ax,x}_2\varphi\left( \frac{1}{\inner{Ax,x}_2}\int_{\sigma(x)^2}\left( \frac{s+t}{2}+\varepsilon \right)\mathrm d\mu_x(s,t) \right)\\
      &=2\inner{Ax,x}_2\varphi\left( \frac{\Re\inner{Ax,x^2}_2+2\varepsilon\inner{Ax,x}_2}{2\inner{Ax,x}_2}\right)\\
      &=2\inner{Ax,x}_2\varphi(z+\varepsilon).
 \end{split}
\]
For $0<\varepsilon\le1$ and $s\ge0$, we have $|f_\varepsilon(s)|\le C(1+s^2)$. Since $x\in L_4$, dominated convergence gives $f_\varepsilon(x)\to f(x)$ in $L_2$. Letting $\varepsilon\downarrow0$ proves \eqref{eq:gapbound}. Adding $K(x)=2\norm{Ax}_2^2-\operatorname{Re}\inner{Ax,x^2}_2$ and using $\operatorname{Re}\inner{Ax,x^2}_2=2z\inner{Ax,x}_2$ proves \eqref{eq:transformed-bound}.
\end{proof}

For $x\in\Dom(A)_{\mathrm{sa}}$, center $x$ and write its shell components as
\begin{equation}\label{eq:center-notation}
 u=x-\tau(x)\mathbf1,\qquad u_k=\Pi_ku,\qquad b_k=\norm{u_k}_2.
\end{equation}
Since $u$ is centered,
\begin{equation}\label{eq:energies}
 \inner{Au,(A-I)u}_2=\norm{Au}_2^2-\inner{Au,u}_2=\sum_{k\ge1}k(k-1)b_k^2\ge0.
\end{equation}
Since $A\mathbf1=0$ and $\tau(Au)=0$, expanding $x^2=\tau(x)^2\mathbf1+2\tau(x)u+u^2$ gives
\begin{equation}\label{eq:Ccenter}
 \operatorname{Re}\inner{Ax,x^2}_2=2\tau(x)\inner{Au,u}_2+\operatorname{Re}\inner{Au,u^2}_2.
\end{equation}
Moreover, $\inner{Au,u}_2\ge\norm u_2^2>0$ when $x$ is nonconstant.

\begin{lemma}[Length-weighted cubic estimate]\label{lem:weighted}
For every centered self-adjoint $u\in\Dom(A)$,
\begin{equation}\label{eq:weighted}
 \operatorname{Re}\inner{Au,u^2}_2\le2\norm u_2\sqrt{2\inner{Au,u}_2\inner{Au,(A-I)u}_2}
 +\inner{Au,(A-I)u}_2\sqrt{\inner{Au,u}_2-\norm u_2^2}.
\end{equation}
\end{lemma}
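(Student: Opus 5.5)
We would expand the cubic pairing in shells and combine Haagerup's cancellation estimate with weighted Cauchy--Schwarz sums. Since $u$ is centered and self-adjoint, $u=\sum_{k\ge1}u_k$ with each $u_k$ self-adjoint, and $Au=\sum_k ku_k$. Hence
\[
 \operatorname{Re}\inner{Au,u^2}_2=\sum_{k,l,m\ge1}k\,\operatorname{Re}\tau(u_ku_lu_m).
\]
We would first justify this for Fourier polynomials. The general case then follows by density in $\Dom(A)$, using \eqref{eq:Sobolev-bounds} to control $u^2$ in $L_2$.

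For a reduced product $\lambda_g\lambda_h$ with $|g|=l$ and $|h|=m$, the length is $l+m-2j$, where $0\le j\le\min\{l,m\}$ is the number of cancelled letters. So $\tau(u_ku_lu_m)\neq0$ forces $(k,l,m)$ to satisfy the triangle inequalities with $k+l+m$ even. The cancellation estimate we would use (the one also used in \cite{xiezhang2026hypercontractivityfreegroup}, going back to \cite{MR520930}) is
\[
 \norm{\Pi_{l+m-2j}(u_lu_m)}_2\le b_lb_m .
\]
It gives $|\tau(u_ku_lu_m)|=|\inner{u_k,\Pi_k(u_lu_m)}_2|\le b_kb_lb_m$. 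By traciality, $\tau(u_ku_lu_m)$ is invariant under cyclic permutations, and taking real parts lets us also use the reversal $\overline{\tau(u_ku_lu_m)}=\tau(u_mu_lu_k)$. So the weight $k$ may be replaced by $\tfrac13(k+l+m)$, or by any other symmetrization of the triple that is convenient.

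We would then order each admissible triple as $a\le b\le c$, with $c\le a+b$ and $c-b\equiv a\pmod 2$, and split the sum into two parts.

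\textbf{Thin triples ($a=1$).} Here $c=b+1$, and the weight is $b+1\le 2\sqrt{b(b+1)}$ type quantities. Grouping by the pair $(b,c)$, we would bound this part by
\[
 \sum_b b_1\,(b+1)\,b_bb_{b+1}.
\]
Cauchy--Schwarz in $b$, with factors $\sqrt{k}$ on one shell and $\sqrt{k(k-1)}$ on the other, should give
\[
 \norm u_2\sqrt{2\inner{Au,u}_2\inner{Au,(A-I)u}_2}.
\]
Here the factor $2$ comes from the number of orderings. This is exactly where the weight $k-1$ must appear: for $b=1$ the pair $(1,1)$ contributes only to an even length, so no triple $(1,1,1)$ survives and the $k(k-1)$ factor vanishes where it should.

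\textbf{Thick triples ($a\ge2$).} For fixed $a$, the admissible $(b,c)$ satisfy $b\le c\le b+a$, so there are at most $a/2+1$ choices of $c$ per $b$. The weight is at most $3c\lesssim$ a product of $\sqrt{c(c-1)}$-type factors. We would bound the sum by
\[
 \sum_{a\ge2}b_a\sum_{b}\sum_{c}c\,b_bb_c
\]
and apply Cauchy--Schwarz twice: first in $(b,c)$ to extract $\inner{Au,(A-I)u}_2=\sum k(k-1)b_k^2$, then in $a$, pairing the multiplicity $\sim a$ with $\sqrt{a-1}\,b_a$. This should produce
\[
 \inner{Au,(A-I)u}_2\sqrt{\sum_a(a-1)b_a^2}=\inner{Au,(A-I)u}_2\sqrt{\inner{Au,u}_2-\norm u_2^2}.
\]
Any leftover mass in the thick part would be absorbed into the first term via $\norm u_2$. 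The two bounds together give \eqref{eq:weighted}.

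The main obstacle is the combinatorial bookkeeping in the last two steps. We must distribute the linear weight $k$ among the three shells and count the admissible triples precisely enough that the constants come out exactly $2\sqrt2$ and $1$. A crude bound like $|\{c\}|\le a+1$ loses a factor, so we would first try exploiting the parity constraint (only every other $c$ is admissible) together with the cyclic symmetrization. If that is not sharp enough, we would instead fix the smallest shell $a$ and apply the cancellation estimate in the more refined form
\[
 \sum_j\norm{\Pi_{b+a-2j}(u_bu_a)}_2^2\le(\text{multiplicity})\,b_a^2b_b^2,
\]
which uses orthogonality over $j$ rather than summing the $j$-terms in absolute value.
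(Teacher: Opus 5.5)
\textbf{There is a genuine gap.} Your setup (the shell expansion, the bound $|\tau(u_ku_lu_m)|\le b_kb_lb_m$, cyclic symmetrization, and density from Fourier polynomials) matches the paper's. But the proposal stops where the actual proof begins. The lemma has the exact constants $2\sqrt2$ and $1$, and the counting that must produce them is left at ``should give'' and ``leftover mass would be absorbed''. As described, your steps do not produce these constants.
\begin{itemize}
\item \emph{Thin part.} You drop the multiplicity of orderings. After symmetrization, the coefficient of $b_1b_bb_{b+1}$ is $4$ for $b=1$ and $4(b+1)$ for $b\ge2$; for example, the triple $\{1,2,3\}$ contributes $12\,b_1b_2b_3$. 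Your Cauchy--Schwarz with weights $\sqrt k$ and $\sqrt{k(k-1)}$ is worst at $b=2$. Together with $b_1\le\norm{u}_2$, it gives $2\sqrt3\,\norm{u}_2\sqrt{\inner{Au,u}_2\inner{Au,(A-I)u}_2}$. This exceeds the allowed first term, which has constant $2\sqrt2$.
\item \emph{Thick part, as you order it.} Suppose Cauchy--Schwarz in $(b,c)$ at fixed $a$ already extracts $\inner{Au,(A-I)u}_2$. Then the weight leaves a factor $\lesssim1/b\le1/a$, which cancels the count $\sim a/2$. What remains is essentially $\inner{Au,(A-I)u}_2\sum_{a\ge2}b_a$. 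But $\sum_a b_a$ is not bounded by a constant times $\bigl(\sum_a(a-1)b_a^2\bigr)^{1/2}$; take $b_a=1/(a\log a)$ on a long range.
\item \emph{Thick boundary triples.} Splitting by the smallest shell puts the triples $\{r,s,r+s\}$ with $r,s\ge2$ in the thick part, and these cannot be charged to the second term. For geometric $b_k=\rho^k$, the matrix $(b_{r+s})_{r,s\ge1}$ has rank one. Then the majorant of the interior triples (all of which are thick) already equals $\inner{Au,(A-I)u}_2\sqrt{\inner{Au,u}_2-\norm{u}_2^2}$. So your absorption of the excess into the first term is unjustified.
\end{itemize}

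\textbf{The missing idea} is the paper's parametrization of admissible triples as $(r+s,s+t,t+r)$ with $r,s,t\ge0$. The symmetrized weight becomes $\tfrac23(r+s+t)$. Split into the boundary (exactly one of $r,s,t$ is zero) and the interior (all are positive), not into thin and thick.
\begin{itemize}
\item \emph{Boundary.} It contributes $4\sum_{r,s\ge1}r\,b_rb_sb_{r+s}$. One Cauchy--Schwarz, pairing $\sqrt r\,b_rb_s$ with $\sqrt r\,b_{r+s}$, gives exactly $2\norm{u}_2\sqrt{2\inner{Au,u}_2\inner{Au,(A-I)u}_2}$, because $\sum_{r,s\ge1}r\,b_{r+s}^2=\tfrac12\inner{Au,(A-I)u}_2$.
\item \emph{Interior.} It equals $2\operatorname{Tr}(DM^3)$, with the Hankel matrix $M=(b_{r+s})_{r,s\ge1}$ and $D=\operatorname{diag}(1,2,\dots)$. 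Then
\[
\operatorname{Tr}(DM^3)\le\norm{M}_{\mathrm{op}}\operatorname{Tr}(DM^2)\le\norm{M}_{\mathrm{HS}}\operatorname{Tr}(DM^2),
\]
where $\norm{M}_{\mathrm{HS}}^2=\inner{Au,u}_2-\norm{u}_2^2$ and $\operatorname{Tr}(DM^2)=\tfrac12\inner{Au,(A-I)u}_2$.
\end{itemize}

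Your ``Cauchy--Schwarz twice'' can reproduce the interior bound, but the first application must not extract $\inner{Au,(A-I)u}_2$ at fixed $a$.
\begin{itemize}
\item At fixed $(s,t)$, bound the sum over the shared index: $\sum_r r\,b_{r+s}b_{r+t}\le X_sX_t$, where $X_s^2=\sum_r r\,b_{r+s}^2$.
\item Then apply Cauchy--Schwarz over all pairs $(s,t)$ to $\sum_{s,t}b_{s+t}X_sX_t$. This gives $\norm{M}_{\mathrm{HS}}\sum_sX_s^2$.
\end{itemize}
The quantity $\inner{Au,(A-I)u}_2=2\sum_sX_s^2$ appears only after this outer sum.
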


The polynomial case is proved in Section~\ref{sec:shell}. By \eqref{eq:Sobolev-bounds}, approximation in the norm $\norm{(1+A)\cdot}_2$ extends the estimate to $\Dom(A)$. We first complete the reduction to this estimate.

\begin{lemma}[The scalar comparison]\label{lem:budget}
Let $x\in\Dom(A)_{\mathrm{sa}}$ be nonconstant and put $u=x-\tau(x)\mathbf1$. Then
\begin{equation}\label{eq:strictbudget}
 z=\frac{\operatorname{Re}\inner{Ax,x^2}_2}{2\inner{Ax,x}_2}
 <\norm x_2\exp\left(\frac{\inner{Au,(A-I)u}_2}{\inner{Au,u}_2}\right).
\end{equation}
\end{lemma}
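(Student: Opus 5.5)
The plan is to substitute the cubic estimate of Lemma~\ref{lem:weighted} into $z$ through the centering identity \eqref{eq:Ccenter}. This reduces \eqref{eq:strictbudget} to a scalar inequality in one variable, the ratio $r=\inner{Au,(A-I)u}_2/\inner{Au,u}_2$.

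\emph{Setup.} Write $a=\tau(x)$, $c=\norm u_2>0$, $E_0=\inner{Au,u}_2$, $E_1=\inner{Au,(A-I)u}_2$, and $t=E_0-c^2=\sum_{k\ge1}(k-1)b_k^2\ge0$. Then $\inner{Ax,x}_2=E_0$ because $A\mathbf1=0$, and $\norm x_2^2=a^2+c^2$. By \eqref{eq:Ccenter},
\[
 z=a+\frac{\operatorname{Re}\inner{Au,u^2}_2}{2E_0}.
\]
Lemma~\ref{lem:weighted} with $E_1=rE_0$ gives
\[
 z\le a+c\sqrt{2r}+\frac r2\sqrt t .
\]

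\emph{Bounding $\sqrt t$ by $c$ times a function of $r$.} This is the point where homogeneity must be restored. I would use two bounds.
\begin{itemize}
\item[(i)] Since $k-1\le k(k-1)$ for $k\ge1$, we have $t\le E_1=r(c^2+t)$. Hence $t\le c^2r/(1-r)$ when $r<1$.
\item[(ii)] By Cauchy--Schwarz, $t^2\le c^2\sum(k-1)^2b_k^2\le c^2E_1=c^2r(c^2+t)$. Solving the quadratic gives $t\le c^2\bigl(r+\sqrt{r^2+4r}\bigr)/2$, valid for all $r$.
\end{itemize}
Together these give $z\le a+c\,\phi(r)$ with
\[
 \phi(r)=\sqrt{2r}+\frac r2\sqrt{\min\Bigl\{\frac{r}{1-r}\mathbf 1_{r<1}+\infty\cdot\mathbf 1_{r\ge1},\ \frac{r+\sqrt{r^2+4r}}2\Bigr\}}.
\]
Cauchy--Schwarz in $\mathbb R^2$ then yields
\[
 a+c\phi(r)\le\sqrt{a^2+c^2}\,\sqrt{1+\phi(r)^2}.
\]

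\emph{Reduction to a scalar inequality and the degenerate case.} It remains to show $1+\phi(r)^2<e^{2r}$ for $r>0$. The case $r=0$ is handled separately. There $E_1=0$, so $u$ lives on the first shell and $t=0$, giving $z\le a<\sqrt{a^2+c^2}$ since $c>0$.

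\emph{Main obstacle: the scalar inequality.} For small $r$ the comparison is tight at first order: $e^{2r}-1=2r+2r^2+\tfrac43r^3+\cdots$ while $\phi^2=2r+\cdots$. This is why bound (i) is essential there; the cruder bound (ii) alone would produce a term of order $r^{7/4}$ that beats $2r^2$. With (i),
\[
 \phi^2=2r+\sqrt2\,\frac{r^2}{\sqrt{1-r}}+\frac{r^3}{4(1-r)}.
\]
Here I worry the coefficient $\sqrt2<2$ only barely suffices, so on an interval such as $0<r\le1/2$ I would verify $2r+\sqrt2 r^2(1-r)^{-1/2}+r^3/(4(1-r))<e^{2r}-1$ by expanding and comparing term by term, with an explicit check near the endpoint. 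For $r\ge1/2$ I would use bound (ii): there $\phi(r)^2$ grows only polynomially (like $r^3$), while $e^{2r}$ grows exponentially, and a crude monotonicity or numerical check on a bounded range, plus a tail estimate, closes it. Because the final inequality is strict for every $r>0$, the resulting bound on $z$ is strict, which is \eqref{eq:strictbudget}.
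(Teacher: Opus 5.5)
Your proof is correct, and its skeleton matches the paper's: the centering identity \eqref{eq:Ccenter}, Lemma~\ref{lem:weighted}, Cauchy--Schwarz in $\mathbb R^2$ against $(\tau(x),\norm u_2)$, and a separate treatment of $r=0$. The genuine difference is how you bound $t=\inner{Au,u}_2-\norm u_2^2$ by $\norm u_2^2$ times a function of $r$.

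The paper uses the single estimate $\inner{Au,u}_2^2\le\norm u_2^2\norm{Au}_2^2=(1+r)\norm u_2^2\inner{Au,u}_2$. This gives $t\le r\norm u_2^2$ for every $r\ge0$; in shell terms it is the nonnegative-variance inequality $\sum_k(k-1)b_k^2\sum_k kb_k^2\le\sum_k b_k^2\sum_k k(k-1)b_k^2$. It is strictly sharper than both of your bounds (i) and (ii), since $r<r/(1-r)$ and $r<(r+\sqrt{r^2+4r})/2$ for $r>0$. With it, $\phi(r)=\sqrt{2r}+r^{3/2}/2$ and $1+\phi^2=1+2r+\sqrt2\,r^2+\tfrac14r^3$. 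This is dominated coefficientwise by $1+2r+2r^2+\tfrac43r^3\le e^{2r}$, so no case split or numerics are needed.

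Your weaker bounds force the split at $r=1/2$ (where (i) and (ii) happen to coincide, both giving $t\le\norm u_2^2$). They also make the scalar inequality genuinely tight. Bound (i) alone fails near $r\approx0.8$, and at $r=1/2$ bound (ii) compares $1.5625$ with $e-1\approx1.718$.

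The inequality $1+\phi(r)^2<e^{2r}$ with your $\phi$ is nevertheless true, so your route closes. As written, though, that step is only a plan. On $(0,1/2]$ you can make it rigorous: use $(1-r)^{-1/2}\le1+r$ (valid for $r+r^2\le1$), $r/(4(1-r))\le r/2$, and $(e^{2r}-1-2r)/r^2\ge2+\tfrac43r+\tfrac23r^2$. These reduce the claim to positivity of $\tfrac23r^2+(\tfrac43-\sqrt2-\tfrac12)r+(2-\sqrt2)$, whose discriminant is negative. On $[1/2,\infty)$ you would still need an explicit estimate. Even the natural simplification $(r+\sqrt{r^2+4r})/2\le r+1$ leaves only about $1.706$ against $1.718$ at $r=1/2$. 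Your approach gains nothing over the paper's; the sharper Cauchy--Schwarz bound is the simplification that removes all of this work.
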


\begin{proof}
Put $\theta=\inner{Au,(A-I)u}_2/\inner{Au,u}_2\ge0$. Cauchy--Schwarz gives
\[
 \inner{Au,u}_2^2\le\norm u_2^2\norm{Au}_2^2=(1+\theta)\norm u_2^2\inner{Au,u}_2,
\]
so $\inner{Au,u}_2-\norm u_2^2\le\theta\norm u_2^2$. Combining \eqref{eq:Ccenter} and \eqref{eq:weighted} therefore yields
\begin{equation}\label{eq:mean-budget}
 \begin{aligned}
 \frac{\operatorname{Re}\inner{Ax,x^2}_2}{2\inner{Ax,x}_2}
 &=\tau(x)+\frac{\operatorname{Re}\inner{Au,u^2}_2}{2\inner{Au,u}_2}\\
 &\le\tau(x)+\norm u_2\left(\sqrt{2\theta}+\frac{\theta^{3/2}}2\right).
 \end{aligned}
\end{equation}
Since $\tau(x)^2+\norm u_2^2=\norm x_2^2$, another Cauchy--Schwarz inequality gives
\begin{equation}\label{eq:scalarpoly}
 \tau(x)+\norm u_2\left(\sqrt{2\theta}+\frac{\theta^{3/2}}2\right)
 \le\norm x_2\sqrt{1+2\theta+\sqrt2\,\theta^2+\tfrac14\theta^3}.
\end{equation}
For $\theta>0$,
\begin{equation}\label{eq:expstrict}
 e^{2\theta}\ge1+2\theta+2\theta^2+\tfrac43\theta^3
 >1+2\theta+\sqrt2\,\theta^2+\tfrac14\theta^3,
\end{equation}
which proves \eqref{eq:strictbudget}. If $\theta=0$, then \eqref{eq:mean-budget} bounds the ratio by $\tau(x)<\norm x_2$, since $u\ne0$. This proves the remaining case.
\end{proof}
\begin{theorem}[Comparison on the operator domain]\label{thm:domain-comparison}
The expressions $K(x)$ and $G(x)$ in \eqref{eq:KG} are well defined for every $x\in\Dom(A)_+$. If $x$ is nonconstant, then
\begin{equation}\label{eq:scale-comparison}
 K(x)+G(x)>-2\inner{Ax,x}_2\log\norm x_2.
\end{equation}
In particular,
\begin{equation}\label{eq:ball-comparison}
 K(x)+G(x)>0\qquad\text{if }x\text{ is nonconstant and }0<\norm x_2<1.
\end{equation}
\end{theorem}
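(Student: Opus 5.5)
The plan is to combine Lemma~\ref{lem:gap} and Lemma~\ref{lem:budget} and finish with a one-line scalar manipulation.

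\textbf{Well-definedness.} For $x\in\Dom(A)_+$, the Sobolev bound \eqref{eq:Sobolev-bounds} gives $x\in L_4$. Hence $x^2\in L_2$. Also $|s\log s|\le C(1+s^2)$ for $s\ge0$, so $x\log x\in L_2$. Since $Ax\in L_2$, all pairings in \eqref{eq:KG} with $L=A$ are finite.

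\textbf{Strict bound.} Fix a nonconstant $x\in\Dom(A)_+$ and set $u=x-\tau(x)\mathbf1$. Lemma~\ref{lem:gap} gives $z>0$ and the bound \eqref{eq:transformed-bound}. To estimate $-\log z$, note that $z>0$, so taking logarithms in \eqref{eq:strictbudget} gives
\[
 -\log z>-\log\norm x_2-\frac{\inner{Au,(A-I)u}_2}{\inner{Au,u}_2}.
\]
The remaining task is to match the energy term in \eqref{eq:transformed-bound} with this exponent. Since $A\mathbf1=0$ and $A$ is self-adjoint, we have $Ax=Au$ and $\inner{Ax,x}_2=\inner{Au,u}_2$. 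By \eqref{eq:energies},
\[
 \frac{\norm{Ax}_2^2}{\inner{Ax,x}_2}-1=\frac{\inner{Au,(A-I)u}_2}{\inner{Au,u}_2}.
\]
Substituting both facts into \eqref{eq:transformed-bound}, the $\theta$ terms cancel. Because $\inner{Ax,x}_2>0$, this leaves the strict inequality
\[
 K(x)+G(x)>-2\inner{Ax,x}_2\log\norm x_2,
\]
which is \eqref{eq:scale-comparison}.

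\textbf{Consequence.} If $0<\norm x_2<1$, then $\log\norm x_2<0$. The right side of \eqref{eq:scale-comparison} is therefore positive, and \eqref{eq:ball-comparison} follows.

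The substantive input is Lemma~\ref{lem:weighted}, used through Lemma~\ref{lem:budget}, whose polynomial case is deferred to Section~\ref{sec:shell}; given those lemmas, this is bookkeeping. The only point to watch is that strictness survives the whole chain. It does, because \eqref{eq:strictbudget} is strict, the logarithm is strictly increasing, and the factor $\inner{Ax,x}_2$ is strictly positive.
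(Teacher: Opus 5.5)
Your proposal is correct and follows the paper's proof: Lemma~\ref{lem:gap} and Lemma~\ref{lem:budget} are combined, with $Ax=Au$, $\inner{Ax,x}_2=\inner{Au,u}_2$, and \eqref{eq:energies} making the $\theta$ terms cancel. Strictness comes from \eqref{eq:strictbudget} and the positivity of $\inner{Ax,x}_2$. Your well-definedness check via \eqref{eq:Sobolev-bounds} matches the remark the paper makes at the start of Section~\ref{sec:reduction}.
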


\begin{proof}
For nonconstant $x$, the word-length spectrum gives $\norm{Ax}_2^2\ge\inner{Ax,x}_2>0$. Write $u=x-\tau(x)\mathbf1$. Combining Lemmas~\ref{lem:gap} and \ref{lem:budget}, using $Ax=Au$ and $\inner{Ax,x}_2=\inner{Au,u}_2$, gives
\[
     \begin{split}
        K(x)+G(x)&\ge2\inner{Ax,x}_2\left(\frac{\norm{Ax}_2^2}{\inner{Ax,x}_2}-1-\log z\right)\\
        &>2\inner{Ax,x}_2\left(\frac{\norm{Ax}_2^2}{\inner{Ax,x}_2}-1-\log\norm{x}_2-\frac{\inner{Au,(A-I)u}_2}{\inner{Au,u}_2}\right)\\
        &=-2\inner{Ax,x}_2\log\norm{x}_2.
     \end{split}
\]
This proves \eqref{eq:scale-comparison} and \eqref{eq:ball-comparison}.
\end{proof}

\begin{proof}[Proof of Theorem~\ref{thm:LSI}]
Fix $2<p<3$. The Markov properties of $P_t$ and Lemma~\ref{lem:compactness} verify the assumptions of Theorem~\ref{thm:variational}. If \eqref{eq:LSI} fails, that theorem with $\rho=1$ gives a solution $x\in\Dom(A)_+$ of $Ax=x\log x$ with $0<\norm x_2<1$. This solution is nonconstant: a nonzero scalar solution $c\mathbf1$ satisfies $c\log c=0$, hence $c=1$, which is incompatible with $\norm x_2<1$. Consequently,
\[
       K(x)+G(x)=2\operatorname{Re}\inner{Ax,Ax-x\log x}_2=0,
\]
contradicting Theorem~\ref{thm:domain-comparison}. Therefore \eqref{eq:LSI} holds.

To see that the coefficient $2$ is optimal, take $h=\lambda_{s_1}+\lambda_{s_1}^*$. Then $h$ is bounded, centered, and self-adjoint, with $Ah=h$. For sufficiently small real $\varepsilon$, the element $x_\varepsilon=\mathbf1+\varepsilon h$ is positive, and Taylor's theorem gives
\[
 \Ent_\tau(x_\varepsilon^2)=2\varepsilon^2\norm h_2^2+O(\varepsilon^3),
 \qquad
 \norm{A^{1/2}x_\varepsilon}_2^2=\varepsilon^2\norm h_2^2.
\]
Their ratio tends to $2$ as $\varepsilon\to0$, proving optimality.
\end{proof}

\begin{remark}[Infinite rank]\label{rem:variational-domain}
The inequality also holds for $\mathbb F_\infty$. The Poisson multipliers remain positive definite because every finite set of words is contained in a finitely generated free subgroup. The derivation and shell estimates apply to finite Fourier sums, with constants independent of the number of generators. Passing to the completion gives the same continuous embeddings and length-weighted cubic estimate. The proofs of Lemmas~\ref{lem:gap} and \ref{lem:budget}, and hence Theorem~\ref{thm:domain-comparison}, then apply unchanged. Let $\mathbb E_j:\mathcal L(\mathbb F_\infty)\to\mathcal L(\mathbb F_j)$ be the conditional expectation onto the subgroup generated by $s_1,\ldots,s_j$. It retains exactly the Fourier coefficients indexed by $\mathbb F_j$, so for $x\in\mathcal Q_+$,
\begin{equation}\label{eq:finite-rank-exhaustion}
 \norm{(1+A)^{1/2}(\mathbb E_jx-x)}_2^2
 =\sum_{g\notin\mathbb F_j}(1+|g|)|\widehat x(g)|^2
 \longrightarrow0.
\end{equation}
Since $\mathbb E_jx\ge0$, finite-rank LSI and entropy lower semicontinuity \eqref{eq:entropy-lsc} give
\[
 \tau(x^2\log x^2)
 \le\liminf_j\tau\bigl((\mathbb E_jx)^2\log((\mathbb E_jx)^2)\bigr)
 \le2\norm{A^{1/2}x}_2^2+\norm x_2^2\log\norm x_2^2.
\]
This proves \eqref{eq:LSI} at infinite rank.
\end{remark}

%% file: Shell_Estimation.tex
\section{The shell projection bound and cubic estimate}\label{sec:shell}

We now prove the Fourier estimate underlying Lemma~\ref{lem:weighted}. For Fourier polynomials $v,w$, the Fourier coefficients of their product satisfy the convolution identity
\begin{equation}\label{eq:convolution}
 \widehat{vw}(g)=\sum_{pq=g}\widehat v(p)\widehat w(q).
\end{equation}
The order of the factors in this identity is essential. In the matrix calculations below, $\operatorname{Tr}$ denotes the ordinary, unnormalized matrix trace, distinct from the trace $\tau$ on $\mathcal M$. For a finite, possibly rectangular matrix $B=(B_{rs})$, the Hilbert--Schmidt and operator norms are given by
\[
 \norm B_{\mathrm{HS}}^2=\operatorname{Tr}(B^*B)=\sum_{r,s}|B_{rs}|^2,\qquad
 \norm B_{\mathrm{op}}=\sup_{\norm\xi_{\ell_2}=1}\norm{B\xi}_{\ell_2},
\]
where $\norm\xi_{\ell_2}=(\sum_s|\xi_s|^2)^{1/2}$ is the Euclidean norm.

The following projection estimate at a fixed cancellation depth is recorded in \cite[Lemma~2.1]{xiezhang2026hypercontractivityfreegroup}, where it is traced to the proof of Haagerup's inequality \cite{MR520930}. We include the proof for completeness.
\begin{lemma}[A fixed cancellation depth]\label{lem:projection}
Suppose that the Fourier polynomials $v_i$ and $w_j$ are supported in shells $i,j\ge0$, respectively. Their product has no component in shell $k$ unless
\begin{equation}\label{eq:triangle}
 |i-j|\le k\le i+j,\qquad i+j+k\text{ is even}.
\end{equation}
For every such $k$,
\begin{equation}\label{eq:projection}
 \norm{\Pi_k(v_iw_j)}_2\le\norm{v_i}_2\norm{w_j}_2.
\end{equation}
Moreover, with $u_k$ and $b_k$ as in \eqref{eq:center-notation}, we have, for $i,j,k\ge0$,
\begin{equation}\label{eq:triple}
 |\tau(u_i u_j u_k)|\begin{cases}
      \le b_i b_j b_k,&\text{if }|i-j|\le k\le i+j\text{ and }i+j+k\text{ is even},\\
      =0,&\text{otherwise}.
 \end{cases}
\end{equation}
\end{lemma}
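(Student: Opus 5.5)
The plan follows Haagerup's decomposition of reduced-word products by cancellation depth.

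\emph{Support condition.} Take $p\in S_i$ and $q\in S_j$. The reduced form of $pq$ is obtained by cancelling the maximal common segment: if $p=p'r$ and $q=r^{-1}q'$ with $|r|=l$ and no further cancellation, then $|pq|=i+j-2l$. Here $0\le l\le\min(i,j)$. Setting $k=i+j-2l$ gives exactly \eqref{eq:triangle}.

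\emph{Projection estimate.} For admissible $k$, the depth is fixed: $l=(i+j-k)/2$. For $g\in S_k$, every factorization $g=pq$ with $p\in S_i$, $q\in S_j$ has the form $p=ar$, $q=r^{-1}c$. Here $|a|=i-l$, $|c|=j-l$, $|r|=l$, and $g=ac$ is reduced. So $(a,c)$ is determined by $g$, namely as the first $i-l$ letters and the remaining letters; a small check is needed when $l=i$ or $l=j$.

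By \eqref{eq:convolution},
\[
 \widehat{v_iw_j}(g)=\sum_{r}\widehat v_i(ar)\widehat w_j(r^{-1}c),
\]
where $r$ ranges over words of length $l$ for which $ar$ and $r^{-1}c$ are reduced. Dropping these reducedness restrictions only enlarges an admissible index set. Concretely, set $\widehat v_i(ar)=0$ unless $ar$ is reduced, and similarly for $w_j$. Then form the matrices $V=(\widehat v_i(ar))_{a,r}$ and $W=(\widehat w_j(r^{-1}c))_{r,c}$, indexed by words $a$ of length $i-l$, $r$ of length $l$, and $c$ of length $j-l$.

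Each $g\in S_k$ corresponds to at most one pair $(a,c)$, and the coefficient at $g$ is the entry $(VW)_{ac}$. Hence
\[
 \norm{\Pi_k(v_iw_j)}_2^2\le\norm{VW}_{\mathrm{HS}}^2\le\norm V_{\mathrm{HS}}^2\norm W_{\mathrm{HS}}^2 .
\]
Since $p=ar\mapsto(a,r)$ is injective, $\norm V_{\mathrm{HS}}=\norm{v_i}_2$, and likewise $\norm W_{\mathrm{HS}}=\norm{w_j}_2$. This gives \eqref{eq:projection}.

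The main point needing care is the injectivity of $g\mapsto(a,c)$. One must make sure that no $g$ arises from two different depths, which holds because $k$ fixes $l$, and that no pair $(a,c)$ is double counted.

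\emph{Triple estimate.} Write
\[
 \tau(u_iu_ju_k)=\inner{u_k^*,u_iu_j}_2=\inner{u_k,\Pi_k(u_iu_j)}_2,
\]
using that $u_k$ is self-adjoint; in general $u_k^*$ is also supported in shell $k$ with the same norm. By Cauchy--Schwarz and \eqref{eq:projection}, this is at most $b_kb_ib_j$ in absolute value. It vanishes unless shell $k$ meets the support of $u_iu_j$, that is, unless \eqref{eq:triangle} holds.

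Two degenerate cases are harmless. Shell $0$ components of the centered $u$ vanish, so $b_0=0$ and the bound is trivial there. Elements with infinitely many Fourier coefficients are not an issue, because all statements are for finitely supported Fourier polynomials, or follow by shell-wise finite sums since each shell is finite for $n<\infty$.
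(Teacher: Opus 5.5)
Your proposal is correct and follows the paper's proof essentially step by step. You use the same splitting of $p\in S_i$ and $q\in S_j$ at the fixed cancellation depth $(i+j-k)/2$, and the same zero-padded matrices indexed by prefix, cancelled block and suffix, whose Hilbert--Schmidt norms equal $\norm{v_i}_2$ and $\norm{w_j}_2$. The triple bound is likewise reduced to \eqref{eq:projection} by the same Cauchy--Schwarz step. The only cosmetic difference is that you invoke $\norm{VW}_{\mathrm{HS}}\le\norm V_{\mathrm{HS}}\norm W_{\mathrm{HS}}$ directly, whereas the paper discards the non-reduced index pairs and writes out the entrywise Cauchy--Schwarz argument that proves this inequality.
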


\begin{proof}
Consider reduced words $p\in S_i$ and $q\in S_j$ in the Fourier supports of $v_i$ and $w_j$, respectively. Reducing the product $pq$ removes a maximal terminal block $c$ of $p$ and $c^{-1}$ from the beginning of $q$. If $d=|c|$, then
\[
 0\le d\le\min(i,j),\qquad k=|pq|=i+j-2d.
\]
This proves the support restriction \eqref{eq:triangle}.

Fix $k$ satisfying \eqref{eq:triangle}, so that the cancellation depth is $d=(i+j-k)/2$. Choose an ordering of each shell and form matrices $U\in M_{|S_{i-d}|\times|S_d|}(\mathbb C)$ and $V\in M_{|S_d|\times|S_{j-d}|}(\mathbb C)$ from the Fourier coefficients of $v_i$ and $w_j$. The rows and columns of $U$ are indexed by $\alpha\in S_{i-d}$ and $c\in S_d$, respectively; those of $V$ are indexed by the same $c\in S_d$ and by $\beta\in S_{j-d}$. Set
\[
 U_{\alpha,c}=
 \begin{cases}
 \widehat v_i(\alpha c),&\alpha c\text{ reduced},\\
 0,&\text{otherwise},
 \end{cases}
 \qquad V_{c,\beta}=
 \begin{cases}
 \widehat w_j(c^{-1}\beta),&c^{-1}\beta\text{ reduced},\\
 0,&\text{otherwise}.
 \end{cases}
\]
Each word $p\in S_i$ has a unique representation as a reduced concatenation $\alpha c$ with $|\alpha|=i-d$ and $|c|=d$. Similarly, each $q\in S_j$ has a unique representation as a reduced concatenation $c^{-1}\beta$ with $|c|=d$ and $|\beta|=j-d$. Thus each input word determines exactly one matrix entry. All entries corresponding to nonreduced concatenations are zero, so we may restrict the sums to reduced concatenations. Parseval's identity gives
\[
 \begin{aligned}
 \norm U_{\mathrm{HS}}^2
 &=\sum_{\substack{\alpha\in S_{i-d}\\c\in S_d}}|U_{\alpha,c}|^2=\sum_{\substack{\alpha\in S_{i-d},\,c\in S_d\\\alpha c\text{ reduced}}}\abs{\widehat v_i(\alpha c)}^2=\sum_{p\in S_i}|\widehat v_i(p)|^2=\norm{v_i}_2^2,\\
 \norm V_{\mathrm{HS}}^2
 &=\sum_{\substack{c\in S_d\\\beta\in S_{j-d}}}|V_{c,\beta}|^2=\sum_{\substack{c\in S_d,\,\beta\in S_{j-d}\\c^{-1}\beta\text{ reduced}}}|\widehat w_j(c^{-1}\beta)|^2=\sum_{q\in S_j}|\widehat w_j(q)|^2=\norm{w_j}_2^2.
 \end{aligned}
\]
For $\alpha\in S_{i-d}$ and $\beta\in S_{j-d}$ with $\alpha\beta$ reduced, every factorization $pq=\alpha\beta$ with $p\in S_i$ and $q\in S_j$ has cancellation depth $d$ and takes the form $p=\alpha c$, $q=c^{-1}\beta$. Thus matrix multiplication sums over exactly the possible cancelled blocks, and \eqref{eq:convolution} gives
\[
 (UV)_{\alpha,\beta}=\sum_{c\in S_d}U_{\alpha,c}V_{c,\beta}=\sum_{\substack{c\in S_d\\\alpha c\text{ reduced}\\c^{-1}\beta\text{ reduced}}}\widehat v_i(\alpha c)\widehat w_j(c^{-1}\beta)
 =\widehat{v_iw_j}(\alpha\beta).
\]
Every reduced concatenation $\alpha\beta$ has length $i+j-2d=k$, and every word in $S_k$ has exactly one such split after $i-d$ letters. Consequently,
\[
 \norm{\Pi_k(v_iw_j)}_2^2=\sum_{\substack{\alpha\in S_{i-d},\,\beta\in S_{j-d}\\\alpha\beta\text{ reduced}}}
       \left|\sum_{c\in S_d}U_{\alpha,c}V_{c,\beta}\right|^2.
\]
Applying Cauchy--Schwarz and then dropping the restriction that $\alpha\beta$ is reduced, which only adds nonnegative terms, gives
\[
 \norm{\Pi_k(v_iw_j)}_2^2\le\sum_{\substack{\alpha\in S_{i-d}\\\beta\in S_{j-d}}}
       \left(\sum_{c\in S_d}|U_{\alpha,c}|^2\right)
       \left(\sum_{c\in S_d}|V_{c,\beta}|^2\right)=\norm U_{\mathrm{HS}}^2\norm V_{\mathrm{HS}}^2=\norm{v_i}_2^2\norm{w_j}_2^2.
\]
This proves \eqref{eq:projection}.

To prove \eqref{eq:triple}, suppose first that $|i-j|\le k\le i+j$ and $i+j+k$ is even. Self-adjointness and shell orthogonality give
\[
      \abs{\tau(u_i u_j u_k)}=\abs{\inner{u_k,u_i u_j}_2}=\abs{\inner{u_k,\Pi_k(u_iu_j)}_2}\leq \norm{u_k}_2\norm{\Pi_k(u_iu_j)}_2\leq b_ib_jb_k.
\]
Otherwise, $\Pi_k(u_iu_j)=0$, so $\tau(u_i u_j u_k)=0$.
\end{proof}

\begin{proof}[Proof of Lemma~\ref{lem:weighted}]
First suppose that $u$ is a centered self-adjoint Fourier polynomial, so all Fourier sums are finite. Averaging over the three cyclic permutations gives
\begin{equation}\label{eq:cyclic}
 \Re\inner{Au,u^2}_2=\Re\sum_{i,j,k\ge1}\frac{i+j+k}{3}\tau(u_i u_j u_k).
\end{equation}
By \eqref{eq:triple}, only triples satisfying \eqref{eq:triangle} contribute to this sum. Every such triple can be written uniquely as
\[
 i=r+s,\qquad j=s+t,\qquad k=t+r,\qquad r,s,t\ge0.
\]
For example, $r=(i+k-j)/2$, with analogous formulas for $s$ and $t$. The triangle and parity conditions ensure that $r,s,t$ are nonnegative integers. The weight in \eqref{eq:cyclic} becomes $2(r+s+t)/3$. Since $u_0=0$, we may include triples with a zero shell index and sum over all $r,s,t\ge0$:
\[
\Re\inner{Au,u^2}_2=\Re\sum_{r,s,t\ge 0}\frac{2(r+s+t)}{3}\tau(u_{r+s} u_{s+t} u_{t+r}).
\]
We distinguish three cases:
\begin{enumerate}
      \item If at least two of $r,s,t$ are zero, then $\tau(u_{r+s}u_{s+t}u_{t+r})=0$ because $u_0=0$.
      \item If exactly one of $r,s,t$ is zero, there are three possible positions for the zero.
      \item If $r,s,t$ are all positive, the scalar majorant is symmetric in $r,s,t$.
\end{enumerate}
Bounding each real part by the corresponding absolute value, applying \eqref{eq:triple}, and using these symmetries, we obtain
\begin{equation}\label{eq:splitmajorant}
 \Re\inner{Au,u^2}_2\le2\sum_{r,s\ge1}(r+s)b_r b_s b_{r+s}+2\sum_{r,s,t\ge1}r\,b_{r+s}b_{s+t}b_{t+r}=4\sum_{r,s\ge1}rb_r b_s b_{r+s}+2\sum_{r,s,t\ge1}r\,b_{r+s}b_{s+t}b_{t+r}.
\end{equation}

For the first term in the final expression, Cauchy--Schwarz yields
\[
4\sum_{r,s\ge1}r b_r b_s b_{r+s}\le 4\left(\sum_{r,s\ge1}r b_r^2b_s^2\right)^{1/2}
        \left(\sum_{r,s\ge1}r b_{r+s}^2\right)^{1/2}.
\]
The first sum under a square root is $\norm u_2^2\inner{Au,u}_2$, while the second is
\[
 \sum_{r,s\ge1}r b_{r+s}^2
 =\sum_{k\ge2}\left(\sum_{r=1}^{k-1}r\right)b_k^2
 =\frac{1}{2}\sum_{k\ge2}k(k-1)b_k^2=\frac{1}{2}\inner{Au,(A-I)u}_2.
\]
Consequently,
\begin{equation}\label{eq:boundary}
      \begin{split}
4\sum_{r,s\ge1}r b_r b_s b_{r+s}
 &\le 4(\norm{u}_2^2\inner{Au,u}_2)^{1/2}\left( \frac{\inner{Au,(A-I)u}_2}{2} \right)^{\frac{1}{2}}=2\norm{u}_2\sqrt{2\inner{Au,u}_2\inner{Au,(A-I)u}_2}.
      \end{split}
\end{equation}

For the sum with $r,s,t\ge1$, choose $N\ge1$ so that $b_k=0$ for $k>N$, and define the real symmetric matrix $M$ and diagonal weight $D$ by
\[
 M=(b_{r+s})_{1\le r,s\le N},\qquad
 D=\operatorname{diag}(1,2,\ldots,N).
\]
Here $M$ is formed from the scalar shell norms $b_k$, and $D$ records the weight $r$. Expanding the trace gives
\[
 \operatorname{Tr}(DM^3)
 =\sum_{r,s,t=1}^N r\,M_{rs}M_{st}M_{tr}
 =\sum_{r,s,t\ge1}r\,b_{r+s}b_{s+t}b_{t+r}.
\]
Thus the second term on the right of \eqref{eq:splitmajorant} is $2\operatorname{Tr}(DM^3)$.

Writing $\lambda_1,\ldots,\lambda_N$ for the real eigenvalues of $M$, we have
\[
 \norm M_{\mathrm{op}}=\max_a|\lambda_a|
 \le\left(\sum_a|\lambda_a|^2\right)^{1/2}
 =\norm M_{\mathrm{HS}}.
\]
Since $M$ is self-adjoint,
\[
      M\leq \norm{M}_{\mathrm{op}}I.
\]
The matrix $\abs M=(M^2)^{1/2}$ commutes with $M$, so multiplication on the left and right by $\abs M$ yields
\[
 M^3=\abs{M}M\abs{M}\le\norm M_{\mathrm{op}}\abs{M}^2=\norm M_{\mathrm{op}}M^2.
\]
Multiplying on the left and right by $D^{1/2}$ and taking traces gives
\[
2\operatorname{Tr}(DM^3)=2\operatorname{Tr}(D^{\frac{1}{2}}M^3D^{\frac{1}{2}})\le2\norm M_{\mathrm{op}}\operatorname{Tr}(D^{\frac{1}{2}}M^2D^{\frac{1}{2}})\le2\norm M_{\mathrm{HS}}\operatorname{Tr}(DM^2).
\]    
The required matrix counts are
\[
\norm M_{\mathrm{HS}}^2=\sum_{r,s=1}^N b_{r+s}^2
 =\sum_{k\ge2}(k-1)b_k^2=\inner{Au,u}_2-\norm{u}_2^2,
\]
and 
\[
      \operatorname{Tr}(DM^2)=\sum_{r,s=1}^N r\,b_{r+s}^2
 =\sum_{k\ge2}\frac{k(k-1)}2b_k^2=\frac{\inner{Au,(A-I)u}_2}{2}.
\]
For $2\le k\le N$, there are $k-1$ positive integer pairs with $r+s=k$, and their weights $r$ sum to $k(k-1)/2$. Terms with $k>N$ vanish because $b_k=0$. Substitution yields
\begin{equation}\label{eq:interior}
       2\operatorname{Tr}(DM^3)\leq \inner{Au,(A-I)u}_2\sqrt{\inner{Au,u}_2-\norm{u}_2^2}.
\end{equation}
Combining \eqref{eq:splitmajorant}, \eqref{eq:boundary}, and \eqref{eq:interior} proves \eqref{eq:weighted} for Fourier polynomials. The approximation following the statement of Lemma~\ref{lem:weighted} extends this estimate to every centered $u\in\Dom(A)_{\mathrm{sa}}$.
\end{proof}